\DeclareSymbolFontAlphabet{\amsmathbb}{AMSb}
\newcommand{\eremk}{\hbox{}\hfill\rule{0.8ex}{0.8ex}}
\newcommand{\Vertiii}[1]{{\left\vert\kern-0.25ex\left\vert\kern-0.25ex\left\vert #1 
		\right\vert\kern-0.25ex\right\vert\kern-0.25ex\right\vert}}
\theoremstyle{plain}
\newtheorem{theorem}{Theorem}[section]
\newtheorem{lemma}[theorem]{Lemma}
\newtheorem{proposition}[theorem]{Proposition}
\newtheorem{remark}[theorem]{Remark}
\numberwithin{equation}{section}
\newcommand{\Norm}[2]{\left\Vert #1 \right\Vert_{#2}}
\newcommand{\SemiNorm}[2]{\left\vert #1 \right\vert_{#2}}
\newcommand{\jump}[1]{\left[\!\left[#1\right]\!\right]}
\newcommand{\average}[1]{\left\{\!\!\left\{#1\right\}\!\!\right\}}
\let\div\relax
\DeclareMathOperator{\div}{div}
\DeclareMathOperator{\divbf}{\bf div}
\DeclareMathOperator{\divdivbf}{\div\divbf}
\DeclareMathOperator{\divdivbfz}{\div\divbf^0}
\DeclareMathOperator{\divdivbfh}{\div\divbf_{\h}}
\DeclareMathOperator{\curl}{curl}
\DeclareMathOperator{\curlbf}{\bf curl}
\DeclareMathOperator{\curlbb}{\underline{\mathbf{curl}}}
\DeclareMathOperator{\sym}{sym}
\DeclareMathOperator*{\argmin}{arg\,min}
\newcommand{\p}{p}
\DeclareMathOperator{\h}{h}
\DeclareMathOperator{\uh}{u_{\h}}
\DeclareMathOperator{\R}{R}
\DeclareMathOperator{\Z}{Z}
\newcommand{\Rbb}{\mathbb R}
\newcommand{\RT}{\mathbf{RT}}
\newcommand{\RTbb}{\underline{\mathbf{RT}}}
\newcommand{\nubold}{\boldsymbol \nu}
\newcommand{\nuboldOmega}{\nubold_{\Omega}}
\newcommand{\taubold}{\boldsymbol \tau}
\newcommand{\tauboldOmega}{\taubold_{\Omega}}
\newcommand{\vbf}{\mathbf v}
\newcommand{\nuboldF}{\nubold_\F}
\DeclareMathOperator{\Nucalh}{\mathcal V_{\h}}
\newcommand{\omeganu}{\omega^\nu}
\DeclareMathOperator{\tauh}{\mathcal T_{\h}}
\DeclareMathOperator{\tautildeh}{\widetilde{\mathcal T}_{\h}}
\newcommand{\Ecal}{\mathcal E}
\let\L\relax
\DeclareMathOperator{\L}{L}
\let\H\relax
\DeclareMathOperator{\H}{H}
\newcommand{\Lbb}{\underline{\mathbf L}}
\DeclareMathOperator{\Lbbh}{\underline{\Lbf}_{\h}}
\newcommand{\xbf}{\mathbf x}
\newcommand{\EcalT}{\Ecal^\T}
\newcommand{\hF}{\h_\F}
\DeclareMathOperator{\symcurlbb}{sym\underline{\curlbf}}
\DeclareMathOperator{\curlsym}{\curlbf\sym}
\DeclareMathOperator{\curlhsym}{\curl_{\h}\sym}
\DeclareMathOperator{\Vh}{V_h}
\DeclareMathOperator{\vh}{v_h}
\DeclareMathOperator{\wh}{w_h}
\newcommand{\boldalpha}{\boldsymbol\alpha}
\newcommand{\qp}{q_\p}
\newcommand{\thetabf}{\boldsymbol \theta}
\DeclareMathOperator{\thetabfh}{\thetabf_{\h}}
\newcommand{\T}{T}
\newcommand{\Ttilde}{\widetilde\T}
\newcommand{\hT}{\h_\T}
\newcommand{\nuboldT}{\nubold_\T}
\newcommand{\tauboldT}{\taubold_\T}
\newcommand{\tauboldF}{\taubold_\F}
\DeclareMathOperator{\nablah}{\nabla_{\h}}
\newcommand{\phinu}{\varphi^\nu}
\newcommand{\homeganu}{\h_{\omeganu}}
\newcommand{\etaT}{\eta_\T}
\newcommand{\omegaT}{\omega^\T}
\newcommand{\ceff}{c_{\rm eff}}
\newcommand{\crel}{c_{\rm rel}}
\newcommand{\wbb}{\underline{\mathbf w}}
\DeclareMathOperator{\psibfh}{\boldsymbol \psi_{\h}}
\DeclareMathOperator{\Bh}{B_{\h}}
\newcommand{\bT}{b_{\T}}
\newcommand{\btildeF}{\widetilde b_{\F}}
\newcommand{\btildeT}{\widetilde b_{\T}}
\DeclareMathOperator{\f}{f}
\DeclareMathOperator{\fh}{f_{\h}}
\newcommand{\rhoT}{\rho_\T}
\newcommand{\Fcal}{\mathcal F}
\newcommand{\Ffrak}{\mathfrak F}
\newcommand{\FfrakDG}{\Ffrak_{\DG}}
\newcommand{\FfrakHbbh}{\Ffrak_{\Hbbh}}
\DeclareMathOperator{\Fcaln}{\Fcal_{\h}}
\DeclareMathOperator{\FcalnB}{\Fcal_{\h}^B}
\DeclareMathOperator{\FcalnI}{\Fcal_{\h}^I}
\newcommand{\F}{F}
\newcommand{\Ftilde}{\widetilde\F}
\newcommand{\Tplus}{\T^+}
\newcommand{\Tminus}{\T^-}
\DeclareMathOperator{\Dtwo}{\underline{\mathbf D}^2}
\DeclareMathOperator{\Dtwoh}{\underline{\mathbf D}^2_{\h}}
\newcommand{\csigma}{c_\sigma}
\newcommand{\ctau}{c_\tau}
\newcommand{\cLcalh}{c_{\Lbbh}}
\newcommand{\DG}{\rm DG}
\DeclareMathOperator{\Hbbh}{\underline{\mathbf H}_{\h}}
\newcommand{\Hbbhuh}{\Hbbh(\uh)}
\newcommand{\Sbb}{\underline{\mathbf S}}
\newcommand{\Tbb}{\underline{\mathbf T}}
\newcommand{\Hbf}{\mathbf H}
\newcommand{\Hbb}{\underline{\mathbf H}}
\newcommand{\Lbf}{\mathbf L}
\newcommand{\zerobf}{\mathbf 0}
\newcommand{\thetabfI}{\thetabf_{I}}
\newcommand{\hD}{\h_D}
\newcommand{\omegaF}{\omega_\F}
\newcommand{\omegatildeF}{\widetilde\omega_\F}
\DeclareMathOperator{\VHoh}{V^{\H^1}_{\h}}
\DeclareMathOperator{\VbfHoh}{\mathbf V^{\H^1}_{\h}}
\DeclareMathOperator{\VHth}{V^{\H^2}_{\h}}
\newcommand{\vI}{v_I}
\newcommand{\FcalT}{\mathcal F^\T}
\newcommand{\FcalOmega}{\mathcal F^\Omega}
\newcommand{\Hnzoh}{\H_{n,0}^\frac12}
\newcommand{\Hezth}{\H_{e,0}^\frac32}
\newcommand{\Hnmoh}{\H_{n}^{-\frac12}}
\newcommand{\Hemth}{\H_{e}^{-\frac32}}
\newcommand{\Hoh}{\H^{\frac12}}
\newcommand{\Hohzz}{\Hoh_{0}}
\newcommand{\vtilde}{\widetilde v}
\newcommand{\Ccal}{\mathcal C}
\newcommand{\VcalF}{\mathcal V^\F}
\DeclareMathOperator{\sign}{sign}
\newcommand{\partialnbfOmega}{\partial_{\nuboldOmega}}
\newcommand{\partialtbfOmega}{\partial_{\tauboldOmega}}
\newcommand{\partialtwotbfOmegatwo}{\partial^2_{\tauboldOmega^2}}
\newcommand{\partialtbfT}{\partial_{\tauboldT}}
\newcommand{\partialtbfF}{\partial_{\tauboldF}}
\newcommand{\wT}{w_\T}
\newcommand{\wbfT}{\mathbf w_\T}
\newcommand{\wF}{w_\F}
\newcommand{\wbfF}{\mathbf w_\F}
\newcommand{\bF}{b_\F}
\newcommand{\bstarF}{b_\F^*}
\newcommand{\To}{\T_1}
\newcommand{\Ttw}{\T_2}
\newcommand{\Tj}{\T_j}
\newcommand{\Abb}{\underline{\mathbf A}}
\renewcommand{\Bbb}{\underline{\mathbf B}}
\DeclareMathOperator{\Abbh}{\Abb_{\h}}
\newcommand{\gimelT}{\gimel_\T}
\newcommand{\psibf}{\boldsymbol \psi}
\newcommand{\Obb}{\underline{\mathbf O}}
\newcommand{\taubf}{\boldsymbol \tau}
\newcommand{\taubfOmega}{\taubf_\Omega}
\newcommand{\Sigmaj}{\Sigma_j}
\newcommand{\xibold}{\boldsymbol{\xi}}
\DeclareMathOperator{\divz}{\div^0}
\newcommand{\sigmabold}{\boldsymbol \sigma}
\newcommand{\xiboldtilde}{\widetilde{\xibold}}
\newcommand{\sigmaboldj}{\sigmabold_j}
\newcommand{\sigmaboldi}{\sigmabold_i}
\newcommand{\sigmaboldh}{\sigmabold_{\h}}
\newcommand{\Bbf}{\mathbf B}
\newcommand{\Sbbh}{\underline{\mathbf S}_{\h}}
\newcommand{\taun}{\mathcal T_{\h}}
\newcommand{\Mbbh}{\underline{\mathbf M}_{\h}}
\newcommand{\Qbbh}{\underline{\mathbf Q}_{\h}}
\newcommand{\CNL}{C}
\newcommand{\psibftildenu}{\widetilde{\psibf}^\nu}
\newcommand{\psibftilde}{\widetilde{\psibf}}
\newcommand{\xibolddelta}{\xibold_\delta}
\newcommand{\xiboldn}{\xibold_n}
\newcommand{\sigmaboldhn}{\sigmabold_{\h,n}}
\newcommand{\thetan}{\theta_n}
\newcommand{\rhodelta}{\rho_\delta}
\newcommand{\psij}{\psi_j}
\newcommand{\Hbfstardivz}{\Hbf^{*}(\divz,\Omega)}
\newcommand{\Hbfostar}{[\Hbf^1(\Omega)]^{*}}
\DeclareMathOperator{\RMbf}{\bf{RM}}
\DeclareMathOperator{\RMbfperp}{\RMbf^\perp}
\DeclareMathOperator{\psibfRMbf}{\psibf_{\RMbf}}
\DeclareMathOperator{\psibfRMbfperp}{\psibf_{\RMbfperp}}
\DeclareMathOperator{\nablaboldS}{\underline{\boldsymbol\nabla}^S}
\DeclareMathOperator{\PibfRMbfperpomeganu}{\boldsymbol\Pi_{\RMbfperp}^{\omeganu}}
\newcommand{\phibf}{\boldsymbol\varphi}
\author{\footnotesize{Th\'eophile Chaumont-Frelet\thanks{
Inria Univ. Lille and Laboratoire Paul Painlev\'e, 59655 Villeneuve-d'Ascq, France,
{\tt theophile.chaumont@inria.fr}},
Joscha Gedicke\thanks{Institut f\"ur Numerische Simulation, Universit\"at Bonn, 53115 Bonn,
{\tt gedicke@ins.uni-bonn.de}},
Lorenzo Mascotto\thanks{Dipartimento di Matematica e Applicazioni, Universit\`a di Milano-Bicocca, 20125 Milan, Italy;
IMATI-CNR, 27100, Pavia, Italy;
Fakult\"at f\"ur Mathematik, Universit\"at Wien, 1090 Vienna, Austria,
{\tt lorenzo.mascotto@unimib.it}}}}{}
\date{}
\date{}
\title{\footnotesize A generalized Hessian-based error estimator
for an IPDG formulation of the biharmonic problem
in two dimensions}
\begin{document}

\maketitle
\begin{abstract}
\noindent
\footnotesize We consider a two dimensional biharmonic problem
and its discretization by means of a symmetric interior penalty
discontinuous Galerkin method.
A novel split of an error measure based on a generalized Hessian
into two terms measuring the conformity
and nonconformity of the scheme
is proven.
This splitting is the departing point for the design of a new
error estimator,
which is provably reliable and efficient for polynomial degree larger than~$3$,
and does not involve any DG stabilization.
Such an error estimator can be bounded from above
by the standard DG residual error estimator.
Numerical results assess the theoretical predictions,
including the efficiency of the proposed estimator,
for all polynomial degrees larger than or equal to~$2$.
\medskip

\noindent\textbf{AMS subject classification}:  65N12; 65N30; 65N50
\medskip
		
\noindent
\textbf{Keywords}: discontinuous Galerkin method;
biharmonic problem; a posteriori error analysis;
generalized Hessian; div-div complex
\end{abstract}

\section{Introduction} \label{section:introduction}

\paragraph*{State-of-the-art.}
Residual-type error estimators for discontinuous Galerkin (DG)
discretizations of elliptic problems are by now
a fairly well-understood topic. In~2003,
three works involving second order elliptic problems were published:
Rivi\`ere and Wheeler~\cite{Riviere-Wheeler:2003} investigated different
DG schemes and showed a posteriori error bounds
in the $\L^2$ norm;
Becker, Hansbo, and Larson~\cite{Becker-Hansbo-Larson:2003}
proved a posteriori error estimates in the energy norm
and, in the derivation of the upper bound,
they used a Helmholtz decomposition
to estimate the nonconforming part of the error,
cf. \cite{Dari-Duran-Padra-Vampa:1996};
Karakashian and Pascal discussed analogous estimates
employing different tools,
i.e., averaging operators mapping piecewise polynomials
into globally continuous piecewise polynomials.
DG residual error estimators typically contain
terms that also appear in residual estimators
for conforming methods
(elemental residuals, normal jumps of the gradients),
plus other jump terms that control the nonconformity of the discrete solution.

Such a structure of the error estimator
essentially extends to the case of
DG discretizations of fourth order problems.
Brenner, Gudi, and Sung~\cite{Brenner-Gudi-Sung:2010} exhibited a posteriori error estimators
for a lowest order $\mathcal C^0$ IPDG discretization
of the biharmonic problem in Hessian-Hessian formulation;
a general order IPDG discretization of the
same problem in Laplacian-Laplacian formulation
was investigated by Georgoulis, Houston,
and Virtanen in~\cite{Georgoulis-Houston-Virtanen:2011}.
In both references, two dimensional problems
were considered; essential tools
in the analysis were arithmetic averaging operators,
mapping continuous piecewise polynomials
and piecewise polynomials, respectively,
into $\mathcal C^1$ finite element spaces.
The extension to the three dimensional case
and polynomial degree explicit estimates
was tackled in~\cite{Dong-Mascotto-Sutton:2021},
where the averaging operator
was replaced by an elliptic reconstruction operator
into $\H^2$ functions, which are not necessarily
piecewise polynomials.
In all cases, the proposed error estimators
contained, in addition to terms that one would
expect from $\H^2$-conforming discretizations
(elemental residuals, jumps along certain directions
of broken second and third order derivatives),
also terms related to the nonconformity of the scheme
(jumps and normal jumps of the gradients).

Such stabilization terms are delicate for two main reasons.
First, they involve stability constants
depending on the degree~$p$ of the scheme;
this may influence the $p$ ``efficiency'' of the error estimators~\cite{Dong-Mascotto-Sutton:2021}.
Second, the choice of the stabilization parameters
affects the proof of the quasi-optimality of adaptive schemes
driven by that error estimator.
The works~\cite{Karakashian-Pascal:2007}
and~\cite{Bonito-Nochetto:2010}
discussed that the stabilization parameter
for DG discretizations of second order elliptic problems
must be even larger than what is needed
for the proof of the well-posedness
of the scheme, in order to show the convergence
and the quasi-optimality of those adaptive algorithms.
Effectively, it was proven in~\cite{Kreuzer-Georgoulis:2018}
for the second order case
and~\cite{Dominicus-Gaspoz-Kreuzer:2024}
for the fourth order case discretized with
a $\mathcal C^0$-IPDG scheme,
that this additional restriction is not really necessary
for the proof of the convergence of the scheme;
yet, it remains open the question whether
that condition is necessary or not for the proof
of quasi-optimality.

\paragraph*{Contents and contributions.}
We consider the biharmonic problem in two dimensions
with clamped boundary conditions
and its discretization by means of
a symmetric interior penalty method,
based on piecewise polynomials of general order larger than
or equal to~$2$.
We construct
a generalized Hessian~$\Hbbh(\uh)$
of the discrete solution~$\uh$ to the method,
which consists of the broken Hessian
and the fourth order DG lifting of~$\uh$
in the sense of display~\eqref{lifting} below.
With the notation of Section~\ref{section:continuous} below,
given $u$ the solution to the continuous problem,
for an error measure of the form
\[
\Norm{\Dtwo u-\Hbbh(\uh)}{\Omega},
\]
which does not depend on the stabilization
parameters of the scheme unless through $\uh$,
we construct an error estimator computable
from $\Hbbh(\uh)$ and the data of the problem only,
which is provably reliable and efficient
for the above error measure for all polynomial degrees
larger than~$2$; numerical results also show
that reliability and efficiency are valid for piecewise
quadratic polynomials as well.
This estimator does not contain explicitly
the stabilization terms that typically
appear in standard residual a posteriori error bounds.

\paragraph*{Crucial tools in the analysis.}
Amongst others, we pinpoint the following
results that are needed to derive
the a posteriori error bounds:
\begin{itemize}
    \item trace operators
    (see Proposition~\ref{proposition:trace-divdiv})
    and a Green identity
    (see display~\eqref{Green-divdiv})
    for the space of $\Hbb(\divdivbf)$ symmetric tensors;
    \item in order to handle domains with nontrivial topologies,
    we construct a decomposition of~$\Hbb(\divdivbf)$
    symmetric tensors into the sum of the~$\symcurlbb$ of certain vector fields
    also appearing for trivial topologies
    and Raviart-Thomas tensors accounting for the
    possibly nontrivial cohomology;
    (see Lemma~\ref{lemma:special-split}
    and Appendix~\ref{appendix:nontrivial-topologies});
    \item $\mathcal C^1$ quasi-interpolators mapping DG functions into globally $\H^2$ piecewise polynomials,
    which preserves values at the vertices
    (see display~\eqref{interpolation:C1}).
\end{itemize}

\paragraph*{Comments on the novel error estimator.}
The current setting has some differences
compared to that of standard residual error
estimators as in~\cite{Brenner-Gudi-Sung:2010, Georgoulis-Houston-Virtanen:2011, Dong-Mascotto-Sutton:2021}.
First, we consider an error measure~\eqref{T2-as-sup}
and design an error estimator~\eqref{global-error-estimator},
which do not involve stabilization terms explicitly.
This may lead in the future to different and possibly easier
proofs of convergence of related adaptive algorithms.
The new error estimator is reliable and efficient,
and can be estimated from above by the
standard residual one
(see Section~\ref{subsection:comparison-standard-ee}).
The derivation of the error measure and estimator
follows ideas from~\cite{ChaumontFrelet:2025}
with significant further elaboration related to the fourth order
nature of the problem.

\paragraph*{List of the main results.}
For the reader's convenience we spell out
a list of the main results of this work:
\begin{itemize}
    \item a novel split~\eqref{rewriting-in-terms-of-max}
    of an error measure involving the generalized Hessian
    in~\eqref{generalized:Hessian}
    into two terms that measure the conforming and nonconforming
    parts;
    \item the two essential properties listed in~\eqref{properties-GH}
    and proven in Section~\ref{subsection:properties:GH}
    that a generalized Hessian should satisfy
    in order to construct a reliable and efficient error estimator;
    \item the a posteriori error estimates in Theorem~\ref{theorem:apos-bounds}
    for the global error estimator in~\eqref{error-estimator};
    \item the comparison with the standard DG residual
    error estimator in Section~\ref{subsection:comparison-standard-ee}.
\end{itemize}

\paragraph*{Outline of the paper.}
We introduce some Sobolev spaces,
the $\Hbb(\divdivbf)$ complex,
and the continuous problem
in Section~\ref{section:continuous}.
The interior penalty method is presented in Section~\ref{section:method},
including a rewriting based on generalized Hessians.
Section~\ref{section:setting} is devoted
to derive an error measure, which can be split into
a combination of a conforming and a nonconforming error measures.
An error estimator and a posteriori error bounds
for those error measures
are detailed in Section~\ref{section:apos-bounds}.
Numerical results are presented in Section~\ref{section:nr}
and some conclusions are drawn in Section~\ref{section:conclusions}.
Appendices~\ref{appendix:nontrivial-topologies}
and~\ref{appendix:tools-nonconforming} are concerned with
(\emph{i}) the construction on domains with nontrivial topologies
of an explicit split of $\Hbb(\divdivbf)$ symmetric tensors
into the sum of the $\symcurlbb$ of vector fields
and certain Raviart-Thomas tensors;
(\emph{ii}) a Korn-type inequality for the $\symcurlbb$ operator.

\section{The continuous setting} \label{section:continuous}
\paragraph*{Differential operators.}
We denote the gradient, divergence, vector divergence,
Laplacian, bilaplacian,
vector curl, tensor curl,
symmetric tensor curl, and Hessian operators
by~$\nabla$, $\div$, $\divbf$, $\Delta$, $\Delta^2$,
$\curlbf$, $\curlbb$,
$\symcurlbb$,
and~$\Dtwo$.

\paragraph*{Admissible domains.}
In what follows,
\begin{equation} \label{Omega}
\Omega \text{ is a Lipschitz polygonal domain,
with possibly nontrivial topology.}
\end{equation}
The boundary of~$\Omega$ is given by the union of $N+1$ connected
components $\{\Sigmaj\}_{j=0}^N$, $\Sigma_0$ being the boundary
of the only unbounded connected component of $\Rbb^d \setminus \Omega$.

\paragraph*{Lebesgue and basic Sobolev spaces.}
In what follows, $D$ is a Lipschitz domain
with diameter~$\hD$ and boundary~$\partial D$.
The Lebesgue space of measurable functions with finite
$\Norm{v}{\L^2(D)}
= \Norm{v}{D}
:= (\int_D \vert v \vert ^2 )^\frac12$ norm
is denoted by~$\L^2(D)$;
this norm is induced by the inner product
$(u,v)_{D} := \int_D u \ v $.

Given a positive integer~$k$, $\H^{k}(D)$ denotes the space
of $\L^2(D)$ functions with distributional derivatives $D^{\boldalpha}$
of order~$k$ in $\L^2(D)$.
We introduce the seminorms and norms
\[
\SemiNorm{v}{k,D}
:= \big(\sum_{\vert \boldalpha \vert=k} 
        \Norm{D^{\boldalpha} v}{D}^2\big)^{\frac12},
\qquad\qquad
\Norm{v}{k,D}
:= \Big( \sum_{\ell=0}^k \big(\hD^{k-\ell}
            \SemiNorm{v}{\ell,D}\big)^2 \Big)^\frac12.
\]
Noninteger order Sobolev spaces are constructed via interpolation theory~\cite{Brenner-Gudi-Sung:2010}.
For vector and tensor Lebesgue and Sobolev spaces,
we replace the symbols~$\H$ and~$\L$
by $\Hbf$ and~$\Hbb$, and~$\Lbf$ and~$\Lbb$.
In the tensor case, additional subscripts~$\Sbb$ and~$\Tbb$
mean that we consider symmetric and trace free tensors.
As a matter of notation, we shall employ
standard, boldface, and underlined boldface
letters to denote scalars, vector fields, and tensors, respectively.

A trace theorem holds true for Sobolev spaces \cite[Chapter~3]{Ern-Guermond:2021}:
there exists a continuous operator mapping $\H^s(D)$ to~$\L^2(\partial D)$ for $s$ larger than $1/2$.
For $s$ in $(1/2, 3/2)$, we denote the image of~$\H^s(D)$
through the trace operator by~$\H^{s-\frac12}(\partial D)$.
For positive~$s$, $\H^s_0(D)$ is the closure of~$\mathcal C^\infty_0(D)$
with respect to the~$\H^s(D)$ norm.
The spaces $\H^1_0(D)$ and $\H^2_0(D)$ coincide
with the subspaces of~$\H^1(D)$ and~$\H^2(D)$
of functions with zero trace over~$\partial D$,
and zero trace and trace of the normal component of the gradient
over $\partial D$, respectively.

We define the Sobolev space $\H^{-1}(\Omega)$ by duality:
\[
\H^{-1}(D):= [\H^1_0(D)]^*,
\qquad\qquad\qquad
\Norm{w}{[\Hbf^1(D)]^*} :=
\sup_{v \in \H^1_0(D)} \frac{\langle w, v \rangle}{\Norm{v}{1,D}}.
\]

\paragraph*{Some Sobolev boundary spaces.}
For a given Lipschitz polygonal domain~$\Omega$
as in~\eqref{Omega},
we define two spaces on its boundary~$\partial\Omega$,
which will be useful to define trace operators
for a certain Sobolev space in~\eqref{Hdivdiv-spaces} below.
The tangent and (outward) normal vectors of $\partial\Omega$
are~$\tauboldOmega$ and~$\nuboldOmega$.

Given the set~$\Fcal^\Omega:=\{\F\}$ of facets of~$\partial\Omega$, let
\[
\Hohzz(\F)
:=
\{ v \in \H^{\frac12}(\F) \mid
    (\vtilde- v)_{|\F}=0 \text{ where } 
    \vtilde \in \Hoh(\partial\Omega),\    
    \vtilde_{\partial\Omega \setminus \F} =0 \}.
\]
In what follows, for any~$\F$ in~$\Fcal^\Omega$,
we shall also need the set $\VcalF$ of vertices of~$\F$.
We introduce a boundary space
as in \cite[Section~$2.3$]{Chen-Huang:2021}
\[
\Hnzoh(\partial\Omega)
:= \{ \partialnbfOmega v_{|\partial\Omega}
    \mid v \in \H^2(\Omega) \cap \H^1_0(\Omega) \}
 = \{ g \in \L^2(\partial\Omega)
        \mid g_{|\F} \in \Hohzz(\F) 
        \quad \forall\F\in\Fcal^\Omega \}
\]
and endow it with the norm
\[
\Norm{g}{\Hnzoh(\partial\Omega)}
:= \inf_{v \in \H^2(\Omega) \cap \H^1_0(\Omega),\ \partialnbfOmega v_{|\partial\Omega} = g} 
    \Norm{v}{2,\Omega}.
\]
We also introduce the space
\[
\Hezth(\partial\Omega)
:=
\{ v_{|\partial\Omega} \mid
    v\in \H^2(\Omega),\
    \partialnbfOmega v_{|\partial\Omega}=0,\quad
    v(\nu)=0\;\; \forall \nu \text{ vertices of } \Omega\}
\]
and endow it with the norm
\[
\Norm{g}{\Hezth(\partial\Omega)}
:= \inf_{\stackrel{v \in \H^2(\Omega) ,\
    \partialnbfOmega v_{|\partial \Omega} = 0,\
    v_{|\partial \Omega}=g}
    {v(\nu)=0\; \forall \nu \text{ vertices of } \Omega}}
    \Norm{v}{2,\Omega}.
\]
Finally, we consider the spaces
\[
\Hnmoh(\partial\Omega)
:= [\Hnzoh(\partial\Omega)]',
\qquad\qquad\qquad\qquad
\Hemth(\partial\Omega)
:= [\Hezth(\partial\Omega)]',
\]
which we can endow with their dual norms.

\paragraph*{The $\Hbb(\divdivbf,\Omega)$ space.}
Introduce the lowest order Raviart-Thomas space~$\RT_0(\Omega)$,
i.e., the span of $(1,0)$, $(0,1)$, and $(x,y)$,
and the spaces
\begin{equation} \label{Hdivdiv-spaces}
\begin{split}
\Hbb(\divdivbf,\Omega)
&:= \{ \Abb \in \Lbb^2(\Omega)
        \mid \divdivbf \Abb \in \L^2(\Omega)\}, \\
\Hbb(\divdivbf^{\zerobf},\Omega)
& := \{ \Abb \in \Lbb^2(\Omega)
        \mid \divdivbf \Abb = 0\} ,
\end{split}
\end{equation}
which we endow with the norm
\[
\Norm{\cdot}{\Hbb(\divdivbf,\Omega)}
:= ( \text{diam}(\Omega)^{-4} \Norm{\cdot}{\Lbb^2(\Omega)}^2
    + \Norm{\divdivbf \cdot}{\L^2(\Omega)}^2)^\frac12 .
\]
We define~$\Hbb_{\Sbb}(\divdivbf,\Omega)$ as the
subspace of symmetric tensors in~$\Hbb(\divdivbf,\Omega)$.
In what follows,
\[
\partialtbfOmega \cdot
\text{ and }
\partialnbfOmega \cdot
\text{ denote the tangential and normal derivatives along } \partial\Omega.
\]
A trace theorem for $\Hbb_{\Sbb}(\divdivbf)$ spaces is valid;
see \cite[Lemmas~$2.3$ and~$2.4$]{Chen-Huang:2021}
and~\cite{Fuhrer-Heuer-Niemi:2019, Fuhrer-Haberl-Heuer:2021}.

\begin{proposition} \label{proposition:trace-divdiv}
For any~$\Bbb$ in $\Hbb_{\Sbb}(\divdivbf,\Omega)$,
there exists a positive constant~$C$ such that
\small\[
\begin{split}
& \Norm{\nuboldOmega^T \Bbb \nuboldOmega}{\Hnmoh(\partial\Omega)}
\le C \Norm{\Bbb}{\Hbb(\divdivbf,\Omega)},
\qquad \Norm{\partialtbfOmega(\tauboldOmega^T \Bbb \tauboldOmega)
+ \nuboldOmega^T \divbf \Bbb}{\Hemth(\partial\Omega)}
\le C \Norm{\Bbb}{\Hbb(\divdivbf,\Omega)}.
\end{split}
\]\normalsize
The two trace operators above admit continuous right-inverses.
\end{proposition}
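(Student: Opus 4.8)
The plan is to deduce both claims from the Green identity~\eqref{Green-divdiv} for $\Hbb_{\Sbb}(\divdivbf,\Omega)$, which I would prove first, and then to construct the right-inverses using the exactness of the div-div complex~\eqref{complex:div-div} together with the stable right-inverse of $\symcurlbold$ from Appendix~\ref{appendix:right-inverse}. \emph{Green identity and traces.} Since smooth symmetric tensor fields are dense in $\Hbb_{\Sbb}(\divdivbf,\Omega)$, it is enough to argue for $\Bbb\in\mathcal C^\infty(\overline\Omega;\Rbb^{2\times 2}_{\mathrm{sym}})$ and pass to the limit. Integrating by parts twice --- first $\div$ against $\nabla v$, then the resulting vector field against $\Dtwo v$ --- and splitting $\nabla v=(\partialnbfOmega v)\nuboldOmega+(\partialtbfOmega v)\tauboldOmega$ along each edge of $\partial\Omega$, one is left with the boundary pairings of $\nuboldOmega^T\Bbb\nuboldOmega$ against $\partialnbfOmega v$ and of $\tauboldOmega^T\Bbb\nuboldOmega$ against $\partialtbfOmega v$; a further integration by parts along each straight edge rewrites the second pairing as one of $\partialtbfOmega(\tauboldOmega^T\Bbb\tauboldOmega)+\nuboldOmega^T\divbf\Bbb$ against $v$, up to point values at the vertices of $\Omega$. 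These vertex contributions drop exactly when the test function lies in $H^2(\Omega)\cap H^1_0(\Omega)$ --- respectively in the space entering the definition of $\Hezth(\partial\Omega)$ --- which is precisely why $\Hnzoh(\partial\Omega)$ and $\Hezth(\partial\Omega)$ carry vanishing conditions at the vertices. This gives~\eqref{Green-divdiv}, and by density defines $\nuboldOmega^T\Bbb\nuboldOmega$ and $\partialtbfOmega(\tauboldOmega^T\Bbb\tauboldOmega)+\nuboldOmega^T\divbf\Bbb$ as elements of $\Hnmoh(\partial\Omega)$ and $\Hemth(\partial\Omega)$ for all $\Bbb\in\Hbb_{\Sbb}(\divdivbf,\Omega)$.

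\emph{Continuity.} Fix $\Bbb\in\Hbb_{\Sbb}(\divdivbf,\Omega)$. Testing~\eqref{Green-divdiv} with $v\in H^2(\Omega)\cap H^1_0(\Omega)$ kills the term with the second trace (since $v_{|\partial\Omega}=0$), and the Cauchy--Schwarz inequality gives $|\langle\nuboldOmega^T\Bbb\nuboldOmega,\partialnbfOmega v\rangle_{\partial\Omega}|\le C\Norm{\Bbb}{H(\divdivbf,\Omega)}\Norm{v}{2,\Omega}$; taking the infimum over all such $v$ with a fixed normal derivative on $\partial\Omega$, and invoking the definition of the norm of $\Hnzoh(\partial\Omega)$, proves the first bound. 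The second bound follows symmetrically, testing with $v\in H^2(\Omega)$ such that $\partialnbfOmega v_{|\partial\Omega}=0$ and $v(\nu)=0$ at every vertex $\nu$, which annihilates the first-trace term, and using the norm of $\Hezth(\partial\Omega)$.

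\emph{Right-inverses.} It suffices to invert each trace separately, keeping the other controlled: lift $g_1$ first, to a tensor whose second trace is then bounded by $C\Norm{g_1}{\Hnmoh(\partial\Omega)}$ thanks to the continuity just proven, and then correct with a tensor having vanishing normal-normal trace and prescribed second trace equal to $g_2$ minus that of the first lift. Both lifts will be sought $\divdivbf$-free, so that the $H(\divdivbf,\Omega)$-norm equals a multiple of the $\Lbb^2(\Omega)$-norm and only the latter must be bounded. To lift $g_1$, I would look for a $\divdivbf$-free $\Bbb\in\Hbb_{\Sbb}(\divdivbf,\Omega)$ with
\[
(\Bbb,\Dtwo v)_{0,\Omega}=-\langle g_1,\partialnbfOmega v\rangle_{\partial\Omega}\qquad\text{for all }v\in H^2(\Omega)\cap H^1_0(\Omega);
\]
by~\eqref{Green-divdiv} this forces $\nuboldOmega^T\Bbb\nuboldOmega=-g_1$, and, the right-hand side being bounded by $\Norm{g_1}{\Hnmoh(\partial\Omega)}\Norm{v}{2,\Omega}$, well-posedness returns exactly $\Norm{\Bbb}{\Lbb^2(\Omega)}\le C\Norm{g_1}{\Hnmoh(\partial\Omega)}$. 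The correction is analogous, with test functions $v\in H^2(\Omega)$ satisfying $\partialnbfOmega v_{|\partial\Omega}=0$ and $v(\nu)=0$ at the vertices. To establish existence and stability of these auxiliary tensors, I would write $\Bbb=\symcurlbold\psibf$ --- legitimate for $\divdivbf$-free symmetric tensors by the exactness of~\eqref{complex:div-div} --- and reduce each problem to a saddle-point problem for $\psibf$ whose inf-sup condition comes from the stable right-inverse of $\symcurlbold$ in Appendix~\ref{appendix:right-inverse}, modulo the kernel of $\symcurlbold$. All the intermediate problems being solved by bounded linear operators, the right-inverses are linear and continuous.

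\emph{Main obstacle.} The hard part is this last step: checking that the combination of~\eqref{complex:div-div} with the $\symcurlbold$ right-inverse really controls the potential --- hence the lift --- by the dual norms $\Norm{g_1}{\Hnmoh(\partial\Omega)}$ and $\Norm{g_2}{\Hemth(\partial\Omega)}$, rather than by the stronger $H^{-1/2}$- and $H^{-3/2}$-type norms that a naive explicit lifting would give. This is exactly where the vertex conditions built into $\Hnzoh(\partial\Omega)$ and $\Hezth(\partial\Omega)$ become indispensable.
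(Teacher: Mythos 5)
Your treatment of Green's identity and the two continuity bounds follows exactly the route the paper sketches (and attributes to \cite{Chen-Huang:2021}): derive~\eqref{Green-divdiv} by elementwise integration by parts plus a tangential integration by parts along each edge, then test with $v\in H^2(\Omega)\cap H^1_0(\Omega)$ to annihilate the second boundary pairing, and with $v$ in the space underlying $\Hezth(\partial\Omega)$ to annihilate the first, finally taking the infimum over representatives. That part is essentially the paper's argument. One small slip, likely propagated from the Proposition statement itself: the tangential integration by parts produces $\partialtbfOmega(\tauboldOmega^T\Bbb\nuboldOmega)$, \emph{not} $\partialtbfOmega(\tauboldOmega^T\Bbb\tauboldOmega)$; the displayed identity~\eqref{Green-divdiv} has the correct $\tauboldOmega$--$\nuboldOmega$ coupling.

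The right-inverse construction is where your proposal has a genuine gap, which you acknowledge in the ``main obstacle'' paragraph. Routing through $\Bbb=\symcurlbold\psibf$ and an unspecified saddle-point problem for $\psibf$ imports tools (the exactness of~\eqref{complex:div-div} and the $\symcurlbold$ right-inverse of Appendix~\ref{appendix:right-inverse}) that the paper reserves for Section~\ref{subsection:upper-lower-bound-T2} and that are not needed here, and the decisive step --- the inf-sup argument that would deliver the bound precisely in the dual norm $\Hnmoh(\partial\Omega)$ --- is only named, not carried out. A direct and much shorter construction avoids all of this: given $g\in\Hnmoh(\partial\Omega)$, solve by Lax--Milgram
\[
u\in H^2(\Omega)\cap H^1_0(\Omega),\qquad
(\Dtwo u,\Dtwo v)_{0,\Omega}=\langle g,\partialnbfOmega v\rangle
\qquad\forall\, v\in H^2(\Omega)\cap H^1_0(\Omega),
\]
where coercivity holds since the only affine $u$ with $u_{|\partial\Omega}=0$ is $u=0$, and the right-hand side is bounded by $\Norm{g}{\Hnmoh(\partial\Omega)}\Norm{v}{2,\Omega}$ by the very definition of the dual norm. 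Testing with $v\in H^2_0(\Omega)$ gives $\divdivbf(\Dtwo u)=0$, so $\Bbb:=\Dtwo u$ lies in $\Hbb_{\Sbb}(\divdivbf,\Omega)$ with $\Norm{\Bbb}{H(\divdivbf,\Omega)}\lesssim\Norm{g}{\Hnmoh(\partial\Omega)}$, and~\eqref{Green-divdiv} identifies $\nuboldOmega^T\Bbb\nuboldOmega=g$. The second trace is lifted by the analogous Lax--Milgram problem posed on $\{v\in H^2(\Omega):\partialnbfOmega v_{|\partial\Omega}=0,\ v(\nu)=0\ \forall\,\text{vertices }\nu\}$. No div-div complex or $\symcurlbold$ right-inverse enters. (Also, as written your lift returns the trace $-g_1$ rather than $g_1$; just flip the sign of $\Bbb$.)
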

The proof of Proposition~\ref{proposition:trace-divdiv}
is strictly related to the validity of
the following Green's identity \cite[Lemma~2.1]{Chen-Huang:2021}:
for all~$v$ in $\H^2(\Omega)$ and all~$\Bbb$
in the space of symmetric tensors in $\Ccal^2(\Omega)$,
\begin{equation} \label{Green-divdiv}
\begin{split}
(\divdivbf \Bbb, v)_{\Omega}
& = (\Bbb,\Dtwo v)_{\Omega}
   - \sum_{\F \in \FcalOmega}
      \sum_{\nu \in \VcalF}
      \sign_{\F,\nu}\ (\tauboldOmega^T \Bbb \nuboldOmega)(\nu) \ v(\nu) \\
& \quad - \sum_{\F\in\FcalOmega} \Big[
        (\nuboldOmega^T\Bbb\nuboldOmega,\partialnbfOmega v)_{\F}
        - \big( \partialtbfOmega (\tauboldOmega^T\Bbb\nuboldOmega)
        + \nuboldOmega^T \divbf\Bbb,v)_{\F} \big) \Big],
\end{split}
\end{equation}
where, given an orientation of~$\F$
giving a start and an end endpoints,
\[
\sign_{\F,\nu}
:= \begin{cases}
    1  & \text{if~$\nu$ is the end endpoint of~$\F$}\\
    -1 & \text{if~$\nu$ is the start endpoint of~$\F$}
\end{cases}
\qquad\qquad\qquad
\forall \F \in\FcalOmega, \nu \in \VcalF.
\]
The last two inner products in~\eqref{Green-divdiv}
are to be interpreted as suitable duality pairings
for tensors~$\Bbb$ in $\Hbb_{\Sbb}(\divdivbf,\Omega)$:
\[
\begin{split}
\sum_{\F\in\FcalOmega}
(\nuboldOmega^T\Bbb\nuboldOmega,\partialnbfOmega v)_{\F}
& = _{-\frac12,\partial\Omega} \langle \nuboldOmega^T\Bbb\nuboldOmega, 
    \partialnbfOmega v \rangle _{\frac12,\partial\Omega},\\
\sum_{\F\in\FcalOmega} 
\big( \partialtbfOmega (\tauboldOmega^T\Bbb\nuboldOmega)
        + \nuboldOmega^T \divbf\Bbb,v)_{\F}
& = _{-\frac32,\partial\Omega} \langle \partialtbfOmega (\tauboldOmega^T\Bbb\nuboldOmega)
        + \nuboldOmega^T \divbf\Bbb,v
        \rangle _{\frac32,\partial\Omega}.
\end{split}
\]
The bounds in Proposition~\ref{proposition:trace-divdiv}
follow from~\eqref{Green-divdiv}, picking first~$v$
in~$\Hnzoh(\partial\Omega)$ and then in~$\Hezth(\partial\Omega)$.

\paragraph*{The model problem.}
Given~$f$ be in $\L^{2}(\Omega)$,
we are interested in finding~$u:\Omega \to \R$ such that
\begin{equation} \label{problem:strong}
\begin{cases}
\Delta^2 u = \f                             & \text{in } \Omega \\
u = 0, \qquad \nuboldOmega^T \nabla u = 0  & \text{on } \partial \Omega.
\end{cases}
\end{equation}
Introduce
\[
V := \H^2_0(\Omega),
\qquad\qquad\qquad\qquad
B(u,v) := (\Dtwo u, \Dtwo v)_{\Omega}
\qquad \forall u,v \in V.
\]
Noting that~$\Delta^2 = \div\divbf(\Dtwo)$,
a weak formulation of~\eqref{problem:strong} reads
\begin{equation} \label{problem:weak}
\begin{cases}
\text{find } u \in V \text{ such that}\\
B(u,v) = (\f , v )_{\Omega} \qquad\qquad \forall v \in V.
\end{cases}
\end{equation}
Problem~\eqref{problem:weak} is well-posed;
see, e.g., \cite[Section 5.9]{Brenner-Scott:2008}.

\section{The method and a generalized Hessian} \label{section:method}
\paragraph*{Meshes.}
We consider a simplicial mesh~$\tauh$.
Given~$\hT$ and~$\rhoT$ the diameter and the maximal inradius
of each element~$\T$ in~$\tauh$,
we introduce the positive shape-regularity parameter~$\gamma_\T$
\begin{equation} \label{def:gamma}
    \frac{\hT}{\rhoT} \le \gamma_T.
\end{equation}
In what follows, the constants appearing in the inequalities
are allowed to depend on $\gamma:=\max_{\T\in\tauh} \gamma_\T$;
on occasions, for given positive constants~$a$ and~$b$,
we shall write $a \lesssim b$ if there exists a positive constant~$C$
only depending on~$\gamma$.

With each~$\tauh$, we associate its set of total~$\Fcaln$,
internal~$\FcalnI$, and boundary~$\FcalnB$ facets.
The diameter of a facet~$\F$ is~$\hF$.
For a given mesh with given set of vertices
$\{\mathfrak v\}$, $\h$ denotes a function defined
on~$\Omega \setminus \{\mathfrak v\}$
such that it equals~$\hT$ and~$\hF$
in the interior of each element~$\T$
and in the interior of each facet~$\F$, respectively.
The set of facets of an element~$\T$ is $\FcalT$.
Given~$\T$ and~$\F$ in~$\tauh$ and~$\Fcaln$,
we define the patches
\small\begin{equation} \label{patches}
\omegaT := \cup \{ \Ttilde \in \tauh \mid 
    \Ttilde \cap \T \ne \emptyset \},
\qquad
\omegaF := \cup \{ \omegaT \mid
    \T\in\tauh,\; \F \in \FcalT \},
\qquad
\omegatildeF := \cup \{ \T \mid \F \in \FcalT \}.
\end{equation}\normalsize
For positive~$s$, we introduce the broken Sobolev space
\[
\H^s(\tauh,\Omega):= \{ v \in \L^2(\Omega) \mid v_{\T} \in \H^s(\T)
                    \quad \forall \T \in \tauh \}.
\]
For each facet~$\F$, we define the jump $\jump{\cdot}_\F$
and average $\average{\cdot}_{\F}$
operators on $\L^2(\F)$ as follows:
for a boundary facet~$\F$ and~$v$ in $\L^2(\F)$,
we set $\jump{v}_\F=\average{v}_\F=v$;
for an internal facet~$\F$
with fixed normal vector $\nuboldF$,
shared by~$\Tplus$ and~$\Tminus$
with outward unit normal vectors~$\nubold_{\Tplus}$
and~$\nubold_{\Tminus}$,
given $v$ in $\H^1(\tauh,\Omega)$ with restrictions
$v_{\Tplus}$ and~$v_{\Tminus}$ over~$\Tplus$ and~$\Tminus$, we set
\[
\jump{v}_{\F} :=
(v_{\Tplus} \nuboldF \cdot \nubold_{\Tplus}
    + v_{\Tminus} \nuboldF \cdot \nubold_{\Tminus})_{|\F},
\qquad\qquad\qquad
\average{v}_{\F} = \frac12 (v_{\Tplus} + v_{\Tminus})_{|\F}.
\]
We employ the same notation for these operators while applied to vector fields
and remove the subscript~$\F$ when it is clear from the context.

We introduce the following skeletal inner product and norm:
for any subset~$\widetilde{\Fcal}_{\textrm h}$ of~$\Fcaln$,
\[
(u,v)_{\widetilde{\Fcal}_{\textrm h}} 
= \sum_{\F \in \widetilde{\Fcal}_{\textrm h}} (u,v)_{\F} ,
\qquad\quad \Norm{v}{\widetilde{\Fcal}_{\textrm h}}^2 := (v, v)_{\widetilde{\Fcal}_{\textrm h}}
\qquad\qquad\quad \forall v \in \L^2(\F),
\; \forall \F \in \widetilde{\Fcal}_{\textrm h}.
\]
All differential operators defined piecewise over the mesh elements
are denoted with the same symbol of the original operator and a subscript~$\h$;
for instance, the piecewise Hessian over~$\tauh$ is denoted by~$\Dtwoh$.

\paragraph*{Some finite element spaces.}
In what follows, $\VbfHoh$
and~$\Vh$ denote
the standard vector Lagrange finite element space of order~$\p$
and  the standard DG space of order~$\p$.

\paragraph{The method, a lifting operator, and a generalized Hessian}.
Let~$\csigma$ and~$\ctau$ be two stabilization parameters
to be chosen sufficiently large;
see Lemma~\ref{lemma:coercivity-continuity} below.
We introduce the IPDG bilinear form
\small\begin{equation} \label{IPDG-bilinear-form-noL}
\begin{split}
\Bh(\uh,\vh)
:= & (\Dtwoh \uh, \Dtwoh \vh)_{\Omega}
    + \Big(\jump{\uh}, \average{\nuboldF^T \divbf\Dtwoh \uh} \Big)_{\Fcaln}
    - \Big(\jump{\nabla \uh}, \average{\nuboldF^T \Dtwoh \uh}\Big)_{\Fcaln} \\
   & + \Big(\average{\nuboldF^T \divbf\Dtwoh\uh}, \jump{\vh} \Big)_{\Fcaln}
     - \Big(\average{\nuboldF^T \Dtwoh\uh}, \jump{\nabla \vh}\Big)_{\Fcaln} \\
   & + \csigma (\h^{-3}\jump{\uh},\jump{\vh})_{\Fcaln}
     + \ctau (\h^{-1}\jump{\nablah \uh},\jump{\nablah \vh})_{\Fcaln}
\qquad\qquad \forall \uh,\vh\in\Vh.
\end{split}
\end{equation}\normalsize
We introduce the lifting operator~$\Lbbh : \Vh \oplus V \to [\Vh]^{2\times 2}$ as
\small\begin{equation} \label{lifting}
(\Lbbh (v), \Abbh)_{\Omega}
:=  \Big(\jump{v}, \average{\nuboldF^T \divbf\Abbh} \Big)_{\Fcaln}
   - \Big(\jump{\nabla v}, \average{\nuboldF^T \Abbh}\Big)_{\Fcaln} 
   \qquad \forall \Abbh \in [\Vh]^{2\times2},
\end{equation} \normalsize
which satisfies the following result;
see \cite[Lemma~5.1]{Georgoulis-Houston:2009}.
\begin{lemma} \label{lemma:properties-Lcal}
The lifting operator in~\eqref{lifting}
is well-defined.
Moreover, there exists a positive constant~$\cLcalh$
only depending on~$\csigma$ and~$\ctau$
in~\eqref{IPDG-bilinear-form-noL},
the shape-regularity parameter~$\gamma$, and~$\p$ such that
\[
\Norm{\Lbbh(\vh)}{\Omega}
\le \cLcalh \left( \h^{-\frac32}\Norm{\jump{\vh}}{\Fcaln}
                    + \h^{-\frac12}\Norm{\jump{\nabla\vh}}{\Fcaln} \right)
\qquad\qquad \forall \vh \in \Vh.
\]
If~$v$ belongs to $V$, then $\Lbbh (v) = [0]^{d\times d}$.
\end{lemma}
Using~\eqref{lifting}, the bilinear form
in~\eqref{IPDG-bilinear-form-noL}
can be equivalently rewritten as
\begin{equation} \label{IPDG-bilinear-form}
\begin{split}
\Bh(\uh,\vh)
:= & (\Dtwoh \uh, \Dtwoh \vh)_{\Omega}
    + (\Lbbh(\uh),\Dtwoh \vh)_{\Omega} 
   + (\Dtwoh \uh, \Lbbh(\vh))_{\Omega}\\
   & + \csigma (\jump{\uh},\jump{\vh})_{\Fcaln}
     + \ctau (\jump{\nablah \uh},\jump{\nablah \vh})_{\Fcaln}
\qquad\qquad \forall \uh,\vh\in\Vh.
\end{split}
\end{equation}
The bilinear form in~\eqref{IPDG-bilinear-form}
is well-defined also on $[\H^2_0(\Omega)]^2$
whereas that in~\eqref{IPDG-bilinear-form-noL} is not.

Introduce the DG norm
\begin{equation} \label{DG-norm}
\Norm{\vh}{\DG}^2
:= \Norm{\Dtwoh \vh}{\Omega}^2 
   + \Norm{\h^{-\frac32}\jump{\vh}}{\Fcaln}^2
   + \Norm{\h^{-\frac12}\jump{\nabla \vh}}{\Fcaln}^2.
\end{equation}
The following result is valid; see \cite[Lemma~5.2]{Georgoulis-Houston:2009}.
\begin{lemma} \label{lemma:coercivity-continuity}
Let the stabilization parameters~$\sigma$ and~$\ctau$ in~\eqref{IPDG-bilinear-form}
be sufficiently large.
The following coercivity and continuity properties hold true:
there exist positive constant~$c_{\flat}$ and~$c_{\sharp}$
only depending on~$\csigma$ and~$\ctau$
in~\eqref{IPDG-bilinear-form-noL},
the shape-regularity parameter~$\gamma$, and~$\p$ such that
\[
\Bh(\vh,\vh) \ge c_{\flat} \Norm{\vh}{\DG}^2,
\qquad\qquad\qquad
\Bh(\uh,\vh) \le c_{\sharp} \Norm{\uh}{\DG} \Norm{\vh}{\DG} .
\]
\end{lemma}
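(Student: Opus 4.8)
The plan is to establish coercivity and continuity of $\Bh$ with respect to the DG norm $\Norm{\cdot}{\DG}$ in~\eqref{DG-norm} by controlling the cross terms involving the lifting operator $\Lbbh$. For continuity, I would simply apply Cauchy--Schwarz to each of the five terms in~\eqref{IPDG-bilinear-form}: the first term is bounded by $\Norm{\Dtwoh\uh}{0,\Omega}\Norm{\Dtwoh\vh}{0,\Omega}$; the lifting cross terms are handled by Cauchy--Schwarz followed by the stability bound~\eqref{stability-Lcal} from Lemma~\ref{lemma:properties-Lcal}, e.g.
\[
(\Lbbh(\uh),\Dtwoh\vh)_{0,\Omega}
\le \Norm{\Lbbh(\uh)}{0,\Omega}\Norm{\Dtwoh\vh}{0,\Omega}
\le \cLcalh\Norm{\uh}{\DG}\Norm{\vh}{\DG};
\]
and the two penalty terms are trivially bounded by $\csigma$ and $\ctau$ times the corresponding jump seminorms, which are in turn part of $\Norm{\cdot}{\DG}^2$. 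Summing these estimates and absorbing all constants into $c_\sharp$ (which then depends on $\csigma$, $\ctau$, $\gamma$, and $\p$, precisely as claimed) gives the continuity bound.

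For coercivity, the idea is that the lifting cross terms, though of mixed sign, can be absorbed into the positive terms provided the penalty parameters are large enough. Setting $\vh = \uh$ in~\eqref{IPDG-bilinear-form},
\[
\Bh(\uh,\uh)
= \Norm{\Dtwoh\uh}{0,\Omega}^2
  + 2(\Lbbh(\uh),\Dtwoh\uh)_{0,\Omega}
  + \csigma\Norm{\jump{\uh}}{0,\Fcaln}^2
  + \ctau\Norm{\jump{\nablah\uh}}{0,\Fcaln}^2.
\]
The middle term is estimated using Cauchy--Schwarz,~\eqref{stability-Lcal}, and Young's inequality with a parameter $\varepsilon>0$: for any $\varepsilon$,
\[
2(\Lbbh(\uh),\Dtwoh\uh)_{0,\Omega}
\ge -\varepsilon\Norm{\Dtwoh\uh}{0,\Omega}^2
    - \frac{\cLcalh^2}{\varepsilon}\Big(\Norm{\h^{-\frac32}\jump{\uh}}{0,\Fcaln}^2
    + \Norm{\h^{-\frac12}\jump{\nablah\uh}}{0,\Fcaln}^2\Big),
\]
after possibly enlarging $\cLcalh$ so that $(a+b)^2\le 2(a^2+b^2)$ is absorbed. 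Choosing $\varepsilon = 1/2$, the Hessian term survives with coefficient $1/2$, and requiring $\csigma \ge 2\cLcalh^2 \cdot 2 = 4\cLcalh^2$ and $\ctau \ge 4\cLcalh^2$ (the precise threshold depending on how the constants were bookkept) ensures the penalty terms retain, say, half their weight. One then sets $c_\flat := \min\{1/2,\ \csigma/2,\ \ctau/2\}$ — or more carefully $c_\flat$ chosen so that all three contributions to $\Norm{\uh}{\DG}^2$ are dominated — to obtain $\Bh(\uh,\uh)\ge c_\flat\Norm{\uh}{\DG}^2$.

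The only genuinely delicate point is the dependence of $\cLcalh$ on the polynomial degree $\p$: since~\eqref{stability-Lcal} carries a $\p$-dependent constant, the required smallness-of-$\varepsilon$ / largeness-of-penalty threshold inherits that dependence, which is why the statement only asserts sufficiency of "sufficiently large" $\csigma,\ctau$ with constants $c_\flat,c_\sharp$ depending on $\p$ and not $\p$-robustness; I would make sure the quantifiers are stated in that order (the thresholds on $\csigma,\ctau$ are fixed first, depending only on $\gamma$ and $\p$ via $\cLcalh$, and then $c_\flat,c_\sharp$ are determined). Everything else is a routine chaining of Cauchy--Schwarz, Young's inequality, and the already-established properties of $\Lbbh$, so no further obstacle is anticipated; this is essentially \cite[Lemma~5.2]{Georgoulis-Houston:2009} and I would cite it for the detailed constant tracking.
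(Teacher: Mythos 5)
Your argument is correct and takes exactly the approach the paper itself relies on: the paper offers no proof of this lemma, deferring entirely to \cite[Lemma~5.2]{Georgoulis-Houston:2009}, whose proof is precisely the Cauchy--Schwarz / lifting-stability~\eqref{stability-Lcal} / Young's-inequality chain you describe. The only slips are cosmetic bookkeeping at the end (with $\varepsilon=1/2$ the jump coefficients become $\csigma-4\cLcalh^2$ and $\ctau-4\cLcalh^2$, so requiring, say, $\csigma,\ctau\ge 4\cLcalh^2+\tfrac12$ gives $c_\flat=\tfrac12$, rather than the $\min\{1/2,\csigma/2,\ctau/2\}$ you wrote), none of which affects the conclusion.
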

We are now in a position to introduce an IPDG method
for the discretization of problem~\eqref{problem:weak}:
\begin{equation} \label{method}
\begin{cases}
    \text{find } \uh \in \Vh \text{ such that}\\
    \Bh(\uh,\vh) = (\f,\vh)_{\Omega} \qquad\qquad\forall\vh\in\Vh.
\end{cases}
\end{equation}
The well-posedness of method~\eqref{method}
follows from Lax-Milgram lemma, Lemma~\ref{lemma:coercivity-continuity},
Cauchy-Schwarz' inequality on the right-hand side of~\eqref{method},
and the Poincar\'e inequality
for piecewise $\H^2$ functions in~\cite{Brenner-Wang-Zhao:2004}.

Given~$\uh$ the solution to~\eqref{method}
and $\Lbbh$ as in~\eqref{lifting},
we introduce the generalized Hessian
\begin{equation} \label{generalized:Hessian}
\Hbbhuh := \Dtwoh \uh + \Lbbh(\uh)
\end{equation}
and rewrite,
for all~$\uh$ and~$\vh$ in~$\Vh$,
the DG bilinear form in~\eqref{IPDG-bilinear-form} as
\small\[
\Bh(\uh,\vh)
= (\Hbb(\uh),\Hbb(\vh))_{\Omega}
    - (\Lbb(\uh),\Lbb(\vh))_{\Omega}
    + \csigma (\jump{\uh},\jump{\vh})_{\Fcaln}
    + \ctau (\jump{\nablah \uh},\jump{\nablah \vh})_{\Fcaln}.
\]\normalsize
\paragraph*{A class of generalized Hessians.}
The forthcoming analysis applies
to all piecewise polynomial tensor fields~$\Hbbh$
satisfying the following properties:
\begin{subequations} \label{properties-GH}
\begin{align}
(\Hbbh,\Dtwo \wh)_{\Omega} = (\f,\wh) _{\Omega}
& \qquad\qquad\qquad\qquad \forall 
    \wh \in \Vh\cap  \H^2_0(\Omega),
        \label{properties-GH:1}\\
(\Hbbh, \Abbh)_{\Omega}=0
& \qquad\qquad\qquad\qquad \forall \Abbh \in \underline{\mathbf P}_{\p}(\taun) \cap \Hbb(\divdivbfz,\Omega). \label{properties-GH:2}
\end{align}    
\end{subequations}
We refer to tensor fields fulfilling~\eqref{properties-GH}
as generalized Hessians.
In Section~\ref{subsection:properties:GH} below,
we shall prove that~$\Hbbhuh$ in~\eqref{generalized:Hessian}
satisfies~\eqref{properties-GH}.

\begin{remark} \label{remark:C1-tests}
In~\eqref{properties-GH:1}, the space~$\Vh\cap  \H^2_0(\Omega)$
is the finite element space~$\VHth$
as in \cite{Morgan-Scott:1975}
for $p$ larger than or equal to~$5$.
Assuming that a given mesh~$\tauh$
is an HCT refinement of a coarser mesh~$\tautildeh$
(i.e., an element~$\widetilde T$ in $\tautildeh$
is refined into three elements of $\tauh$
obtained connecting the vertices of~$\widetilde T$
to its centroid),
the space~$\Vh\cap  \H^2_0(\Omega)$
contains the HCT finite element
space~\cite{Douglas-Dupont-Percell-Scott:1979}
of degree~$\p=3$ and~$4$ on the mesh~$\tautildeh$.
\eremk
\end{remark}

\begin{remark} \label{remark:second-assumption-GH}
Observing that $\divdivbf (\symcurlbb)\cdot=0$,
property~\eqref{properties-GH:2} implies
\begin{equation} \label{property-GH-2-bis}
(\Hbbh, \Abbh)_{\Omega}=0
\qquad \text{where } \Abbh=\symcurlbb\thetabfh
\qquad\qquad \forall \thetabfh \in \VbfHoh,
\end{equation}
where we recall that~$\VbfHoh$ denotes the standard
vector $\H^1$-conforming finite element space of order~$\p+1$.
\eremk
\end{remark}

\section{A splitting of the generalized Hessian error} \label{section:setting}

Let~$\Hbbh$ be any generalized Hessian satisfying~\eqref{properties-GH}.

\subsection{A regular decomposition for the $\symcurlbb$ operator} \label{subsection:preliminari-error-split}
We present here a result,
which will be instrumental in deriving the
crucial identity~\eqref{rewriting-in-terms-of-max} below.
We postpone its proof to Appendix~\ref{appendix:nontrivial-topologies} below.
\begin{lemma} \label{lemma:special-split}
For all~$\Abb$ in $\Hbb_{\Sbb}(\divdivbfz,\Omega)$
there exist~$\Qbbh$ in $\RTbb_1(\taun) \cap \Hbb_{\Sbb}(\divdivbfz,\Omega)$
and~$\thetabf$ in~$\Hbf^1(\Omega)$ such that
\begin{equation} \label{eq:special-split-2}
\Abb = \symcurlbb\thetabf + \Qbbh.
\end{equation}
In particular, we have
\begin{equation} \label{RT-vanishes}
(\Hbbh,\Abb)
\overset{\eqref{eq:special-split-2}}{=}
    (\Hbbh,\symcurlbb\thetabf) + (\Hbbh,\Qbbh) 
\overset{\eqref{properties-GH:2}}{=}
    (\Hbbh,\symcurlbb\thetabf).
\end{equation}
\end{lemma}

\subsection{The error splitting}
\label{subsection:error-split}
Given~$s$ in~$V$ such that
\begin{equation} \label{definition:s}
s := \argmin_{t \in V} \Norm{\Hbbh - \Dtwo t}{\Omega}
\qquad\Longleftrightarrow\qquad
(\Dtwo s -\Hbbh, \Dtwo v)=0 \quad\forall v\in V,
\end{equation}
Pythagoras' theorem implies
\begin{equation} \label{Pythagoras}
\Norm{\Dtwo u - \Hbbh}{\Omega}^2
= \Norm{\Dtwo (u - s)}{\Omega}^2
  + \Norm{\Dtwo s - \Hbbh}{\Omega}^2
=: T_1^2 + T_2^2  .
\end{equation}
As for the term~$T_1$, a duality argument reveals that
\begin{equation} \label{T1-as-sup}
T_1 
= \sup_{v\in V,\ \SemiNorm{v}{2,\Omega}=1} (\Dtwo(u-s), \Dtwo v)_{\Omega}
\overset{\eqref{definition:s}}{=}
    \sup_{v\in V,\ \SemiNorm{v}{2,\Omega}=1} (\Dtwo u -\Hbbh, \Dtwo v)_{\Omega} .
\end{equation}
We also rewrite the term~$T_2$ as a sup
over a suitable space. To this aim, we need a technical result.

\begin{proposition} \label{proposition:2nd-term-divdiv-zero}
The tensor $\Dtwo s - \Hbbh(\uh)$
belongs to $\Hbb(\divdivbf^{\zerobf},\Omega)$
in~\eqref{Hdivdiv-spaces}.
\end{proposition}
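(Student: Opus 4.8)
The plan is to show that $\divdivbf(\Dtwo s - \Hbbh(\uh)) = 0$ by testing against a suitably rich set of functions and exploiting the defining property of $s$ in~\eqref{definition:s}. First I would recall that $\Dtwo s - \Hbbh(\uh)$ lies in $\Lbb^2(\Omega)$ and is symmetric (both $\Dtwo s$ and the lifting/broken Hessian are symmetric-tensor valued), so it suffices to verify that $\divdivbf$ of it vanishes in the distributional sense. Concretely, I would take an arbitrary $\varphi \in \mathcal C^\infty_0(\Omega)$ and compute
\[
\langle \divdivbf(\Dtwo s - \Hbbh(\uh)), \varphi \rangle
= (\Dtwo s - \Hbbh(\uh), \Dtwo \varphi)_{0,\Omega},
\]
using that $\mathcal C^\infty_0(\Omega) \subset V = H^2_0(\Omega)$ and integrating by parts twice (equivalently, invoking that $\divdivbf \Dtwo \varphi = \Delta^2 \varphi$ with no boundary contributions for $\varphi$ compactly supported). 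The characterization in~\eqref{definition:s} says exactly that $(\Dtwo s - \Hbbh(\uh), \Dtwo v)_{0,\Omega} = 0$ for \emph{all} $v \in V$, in particular for $v = \varphi$; hence the pairing vanishes for every test function, which is the definition of $\divdivbf(\Dtwo s - \Hbbh(\uh)) = 0$.

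The one point requiring a little care is that the orthogonality in~\eqref{definition:s} is stated for $v$ in $V$, whereas $\Dtwo$ of a DG lifting is only piecewise smooth; but this causes no trouble here because we test against $\Dtwo v$ with $v \in V$ (a genuine $H^2_0$ function), so both $\Dtwo s$ and $\Hbbh(\uh)$ are paired against an $L^2$ tensor and no interelement terms arise on the $s$--$\Hbbh(\uh)$ side. In other words, the variational identity in~\eqref{definition:s} is already exactly the weak statement ``$\divdivbf(\Dtwo s - \Hbbh(\uh))$ acts as zero on $V$'', and since $\mathcal C^\infty_0(\Omega) \subset V$ this is enough to conclude $\divdivbf(\Dtwo s - \Hbbh(\uh)) = 0$ as an identity in $L^2(\Omega)$ — in fact one even obtains that $\divdivbf$ of the tensor, a priori only a distribution, is the zero element of $L^2(\Omega)$, so the tensor belongs to $\Hbb(\divdivbf,\Omega)$ and its $\divdivbf$ vanishes, i.e.\ to $\Hbb(\divdivbf^{\zerobf},\Omega)$.

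I do not expect a genuine obstacle: the statement is essentially a reformulation of the normal equation~\eqref{definition:s} combined with the observation $\Delta^2 = \divdivbf \Dtwo$ and the density of $\mathcal C^\infty_0(\Omega)$ in the test space. If one wants to be fully explicit about the symmetry and the $\Lbb^2$ membership, the only thing to write out is that $\Lbbh(\uh) \in [\Vh]^{2\times 2}$ is (by inspection of~\eqref{lifting}, whose right-hand side is symmetric in the tensor argument through $\divbf$ and the full tensor contraction) symmetric-valued, and that $\Dtwoh\uh$ is symmetric elementwise; hence $\Hbbh(\uh)$ is a symmetric $\Lbb^2$ tensor, as is $\Dtwo s$, so their difference lies in the symmetric subspace and, by the above, in $\Hbb(\divdivbf^{\zerobf},\Omega)$.
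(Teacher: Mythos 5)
Your proof is correct and follows essentially the same route as the paper: the orthogonality in~\eqref{definition:s}, tested against $\varphi\in\mathcal C^\infty_0(\Omega)\subset V$, is exactly the statement that the distributional $\divdivbf$ of $\Dtwo s-\Hbbh(\uh)$ vanishes, which is what the paper's appeal to the Green identity~\eqref{Green-divdiv} with locally supported test functions amounts to. One small caveat: your closing claim that $\Lbbh(\uh)$ is symmetric-valued is neither justified by inspection of~\eqref{lifting} (the right-hand side is not invariant under transposing $\Bbbh$, and the paper itself writes $\sym\Hbbh$ explicitly elsewhere) nor needed, since $\Hbb(\divdivbf^{\zerobf},\Omega)$ as defined in~\eqref{Hdivdiv-spaces} imposes no symmetry.
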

\begin{proof}
By definition~\eqref{definition:s}, we have
\[
0
= (\Bbb, \Dtwo v)_{\Omega}
= _{-2}\langle \divdivbf \Bbb, v \rangle_2
\qquad\qquad\qquad
\forall v\in V.
\]
\end{proof}

Proposition~\ref{proposition:2nd-term-divdiv-zero} implies
that the space~$\Hbb(\divdivbf^{\zerobf},\Omega)$
is Hilbert with respect to the $\L^2(\Omega)$ norm.
Hence, a duality argument entails the following rewriting
of the term~$\T_2$ in~\eqref{Pythagoras}:
\begin{equation} \label{T2-as-sup}
\begin{split}
T_2 
&   = \sup_{\wbb\in \H(\divdivbf^0,\Omega),\ \Norm{\wbb}{\Omega}=1}
       (\Hbbh - \Dtwo s, \wbb)_{\Omega} \\
&   \overset{\eqref{eq:special-split-2}, \eqref{RT-vanishes}}{=}
        \sup_{\psibf \in \Hbf^1(\Omega),\ 
        \Norm{\symcurlbb \psibf}{\Omega}=1}
       (\Hbbh - \Dtwo s, \symcurlbb \psibf)_{\Omega} \\
&   \overset{\Dtwo s=\sym (\Dtwo s), (\text{IBP})}{=}
    \sup_{\psibf \in \Hbf^1(\Omega),\ 
       \Norm{\symcurlbb \psibf}{\Omega}=1}
       (\Hbbh, \symcurlbb \psibf)_{\Omega} .
\end{split}
\end{equation}
Combining~\eqref{Pythagoras},
\eqref{T1-as-sup}, and~\eqref{T2-as-sup},
we obtain the following crucial identity
on domains~$\Omega$ with nontrivial topology:
\small{\begin{equation} \label{rewriting-in-terms-of-max}
\begin{aligned}
\Norm{\Dtwo u-\Hbbh}{\Omega} ^2
& = \Big(\sup_{v\in V,\ \SemiNorm{v}{2,\Omega}=1}  (\Dtwo u -\Hbbh, \Dtwo v)_{\Omega} \Big)^2 \\
& \quad +  \Big(\sup_{\psibf \in \Hbf^1(\Omega),\ 
            \Norm{\symcurlbb \psibf}{\Omega}=1}
       (\Hbbh, \symcurlbb \psibf)_{\Omega}) \Big)^2.
\end{aligned}
\end{equation}}\normalsize

\begin{remark} \label{remark:kernel-EM}
The error measure on the left-hand side
of~\eqref{rewriting-in-terms-of-max}
is an $\L^2$ measure of the approximation
of $\Hbbh(\uh)$ to the Hessian of~$u$.
Similar to the Poisson case~\cite{ChaumontFrelet:2025},
this does not guarantee any control
on the approximation of~$\uh$ to~$u$ itself,
unless the jumps are lifted to
sufficiently high-order polynomial
spaces, cf. \cite{John-Neilan-Smears:2016}.
\eremk
\end{remark}

\section{A posteriori bounds for a class of generalized Hessians} \label{section:apos-bounds}

Aim of this section is to derive upper and lower a posteriori error bounds
for the error in~\eqref{rewriting-in-terms-of-max}
in terms of the error estimator
\begin{subequations}
\label{error-estimator}
\begin{equation} \label{global-error-estimator}
    \eta^2 := \sum_{\T\in\tauh} \etaT^2,
\end{equation}
where,
for any generalized Hessian~$\Hbbh$ satisfying~\eqref{properties-GH},
\begin{align}
\nonumber
\etaT^2
&   := \sum_{j=1}^5 \eta_{j,\T}^2
    :=  \hT^4 \Norm{\f-\div\divbf \Hbbh}{\T}^2\\
\nonumber
& \quad + \frac12 \sum_{\F\in\FcalT\cap\FcalnI}
    \Big( \hF\Norm{\nuboldF^T\jump{\Hbbh}\nuboldF}{\F}^2
    + \hF^3 \Norm{\partialtbfF(\tauboldF^T \jump{\Hbbh}\nuboldF)
    + \nuboldF^T \jump{\divbf\Hbbh}}{\F}^2 \Big) \\
& \quad + \hT^2 \Norm{\curlsym\Hbbh}{\T}^2
     + \sum_{\F\in\FcalT} \hF \Norm{\tauboldF^T  \jump{\sym\Hbbh}}{\F}^2.
\label{local-error-estimator}
\end{align}
\end{subequations}
The local error estimators in~\eqref{local-error-estimator}
do not depend explicitly on the stabilization of the DG method;
if $\Hbbh$ is that in~\eqref{generalized:Hessian},
then the stabilization term is encapsulated in~$\Hbbhuh$.
The first three terms and the last two terms
on the right-hand side of~\eqref{local-error-estimator}
are residual-type terms related to the
conforming ($T_1$ in~\eqref{rewriting-in-terms-of-max})
and nonconforming ($T_2$ in~\eqref{rewriting-in-terms-of-max})
parts of the error, respectively.

\begin{remark} \label{remark:powers-h}
All the powers of~$\h$ in~\eqref{local-error-estimator} are positive.
\eremk
\end{remark}

The main result of this section is reported here;
its proof is a given in
Sections~\ref{subsection:upper-lower-bound-T1}
and~\ref{subsection:upper-lower-bound-T2}.

\begin{theorem} \label{theorem:apos-bounds}
Given~$u$ and~$\uh$ the solutions
to~\eqref{problem:weak} and~\eqref{method},
let~$\Hbbh(\uh)$ be the generalized Hessian as in~\eqref{generalized:Hessian},
and~$\eta$ and $\etaT$ be the global and local error estimators as
in~\eqref{global-error-estimator} and~\eqref{local-error-estimator}.
Furthermore, let~$\fh$ denote the piecewise $\L^2$ projection
onto the space of polynomials of degree~$\p$.
Then, the following reliability and efficiency error bounds are valid:
there exist positive constants $C_{\rm rel}$ and $C_{\rm eff}$
independent of~$\h$,
but depending on~$\gamma$ in~\eqref{def:gamma},
the shape of the vertex patches in the mesh,
and the polynomial degree~$\p$, such that
\begin{subequations} \label{apos-bound}
\begin{align}
& \Norm{\Dtwo u - \Hbbh(\uh)}{\Omega}
\le C_{\rm rel} \eta, \label{apos-bound-1}\\
& \etaT \le C_{\rm eff} \big(\Norm{\Dtwoh u - \Hbbh(\uh)}{\omegaT}
            + \sum_{\Ttilde \in \omegaT}
             \h_{\Ttilde}^2 \Norm{\f-\fh}{\Ttilde} \big)
             \qquad\qquad \forall \T \in \tauh \label{apos-bound-2}.
\end{align}
\end{subequations}
Bound~\eqref{apos-bound-2} holds true for any polynomial degree~$\p$
larger than or equal to~2.
Bound~\eqref{apos-bound-1} is valid assuming
$\p$ larger than or equal to~$5$ on any mesh,
and $\p=3$ and~$4$ on meshes stemming from HCT refinements
as discussed in Remark~\ref{remark:C1-tests}.
\end{theorem}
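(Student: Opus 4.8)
\emph{Overall strategy.} The plan is to bound the two terms $T_1$ and $T_2$ of the split~\eqref{rewriting-in-terms-of-max} separately, matching $T_1$ with the ``conforming'' contributions $\eta_{1,\T}^2+\eta_{2,\T}^2+\eta_{3,\T}^2$ and $T_2$ with the ``nonconforming'' ones $\eta_{4,\T}^2+\eta_{5,\T}^2$ of~\eqref{local-error-estimator}; efficiency is then proven term by term with the bubble-function technique.

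\emph{Reliability.} For $T_1$ I would first observe the Galerkin orthogonality $(\Dtwo u-\Hbbh,\Dtwo\wh)_{0,\Omega}=0$ for every $\wh$ in $\Vh\cap H^2_0(\Omega)$, which follows from the weak formulation~\eqref{problem:weak} together with property~\eqref{properties-GH:1}. Given $v$ in $V$ with $\SemiNorm{v}{2,\Omega}=1$, I would take a $\mathcal{C}^1$ quasi-interpolant $\vI\in\Vh\cap H^2_0(\Omega)$ of $v$ which preserves the vertex values and obeys the local bounds $\Norm{v-\vI}{0,\T}\lesssim\hT^2\SemiNorm{v}{2,\omegaT}$, $\Norm{v-\vI}{0,\F}\lesssim\hF^{3/2}\SemiNorm{v}{2,\omegaF}$, $\Norm{\partial_{\nuboldF}(v-\vI)}{0,\F}\lesssim\hF^{1/2}\SemiNorm{v}{2,\omegaF}$; this is where the hypotheses on $\p$ enter, using the Morgan--Scott space for $\p\ge5$ and the HCT space on HCT-refined meshes for $\p=3,4$, cf.\ Remark~\ref{remark:C1-tests}. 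Writing $\phi:=v-\vI$, orthogonality gives $(\Dtwo u-\Hbbh,\Dtwo v)_{0,\Omega}=(\Dtwo u-\Hbbh,\Dtwo\phi)_{0,\Omega}$; I would then integrate by parts element by element via~\eqref{Green-divdiv} applied to $\sym\Hbbh$ on each~$\T$ (the quantities $\nuboldF^T\jump{\Hbbh}\nuboldF$, $\partialtbfF(\tauboldF^T\jump{\Hbbh}\nuboldF)+\nuboldF^T\jump{\divbf\Hbbh}$ and $\divdivbf\Hbbh$ are unchanged when $\Hbbh$ is replaced by its symmetric part, so this is legitimate). All vertex terms cancel because $\phi$ vanishes at the vertices, the $\partial\Omega$ boundary contributions vanish because $\phi\in H^2_0(\Omega)$, and the interior facet contributions assemble into the jumps appearing in $\eta_{2,\T}$ and $\eta_{3,\T}$; using $(\Dtwo u,\Dtwo\phi)_{0,\Omega}=(f,\phi)_{0,\Omega}$ produces the elemental residual $f-\divdivbf\Hbbh$ of $\eta_{1,\T}$. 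A Cauchy--Schwarz inequality on each element and facet, the interpolation bounds above, and a final discrete Cauchy--Schwarz using the finite overlap of the patches then give $T_1\lesssim\eta$. For $T_2$ I would use property~\eqref{properties-GH:2}: given $\psibold$ in $\Hbf^1(\Omega)$ with $\Norm{\symcurlbold\psibold}{0,\Omega}=1$, since $\symcurlbold$ annihilates $\RT_0(\Omega)$ and $(\Hbbh,\symcurlbold\psibold)_{0,\Omega}$ depends only on $\symcurlbold\psibold$, one may replace $\psibold$ by its image under the stable right-inverse of $\symcurlbold$ from Appendix~\ref{appendix:right-inverse}, so that $\SemiNorm{\psibold}{1,\Omega}\le C_{JG}$ with $C_{JG}$ the constant of~\eqref{psibold-thetabold-relation}. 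Subtracting a Scott--Zhang interpolant $\thetaboldh\in\VbfHoh$ and invoking~\eqref{properties-GH:2} gives $(\Hbbh,\symcurlbold\psibold)_{0,\Omega}=(\sym\Hbbh,\symcurlbold(\psibold-\thetaboldh))_{0,\Omega}$; integrating by parts element by element, the volume term produces $\curlsym\Hbbh$ and the interior and boundary facet terms produce $\tauboldF^T\jump{\sym\Hbbh}$, and Cauchy--Schwarz together with $\Norm{\psibold-\thetaboldh}{0,\T}\lesssim\hT\SemiNorm{\psibold}{1,\omegaT}$ and $\Norm{\psibold-\thetaboldh}{0,\F}\lesssim\hF^{1/2}\SemiNorm{\psibold}{1,\omegaF}$ yields $T_2\lesssim\eta$. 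Squaring and adding proves~\eqref{apos-bound-1}.

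\emph{Efficiency.} I would argue term by term. For $\eta_{1,\T}$ I would split $f-\divdivbf\Hbbh=(f-\fh)+(\fh-\divdivbf\Hbbh)$, test the polynomial part against a scaled fourth-order element bubble $w\in H^2_0(\T)$ extended by zero, use $(f,w)_\T=(\Dtwo u,\Dtwo w)_\T$ and $(\divdivbf\Hbbh,w)_\T=(\Hbbh,\Dtwo w)_\T$ (no boundary or vertex terms since $w\in H^2_0(\T)$), and the inverse inequality $\SemiNorm{w}{2,\T}\lesssim\hT^{-2}\Norm{w}{0,\T}$, obtaining $\hT^2\Norm{\fh-\divdivbf\Hbbh}{0,\T}\lesssim\Norm{\Dtwoh u-\Hbbh}{0,\T}+\hT^2\Norm{f-\fh}{0,\T}$. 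For $\eta_{2,\T}$ and $\eta_{3,\T}$ I would use the two associated facet bubbles supported in $\omegatildeF$---one with prescribed normal derivative and vanishing trace on $\F$ (capturing $\nuboldF^T\jump{\Hbbh}\nuboldF$), one with prescribed trace and vanishing normal derivative (capturing $\partialtbfF(\tauboldF^T\jump{\Hbbh}\nuboldF)+\nuboldF^T\jump{\divbf\Hbbh}$)---repeating the above integration by parts and absorbing the resulting volume residuals with the bound just obtained for $\eta_{1,\T}$. For $\eta_{4,\T}$ I would test $\curlsym\Hbbh$ against $w:=\bT\,\curlsym\Hbbh\in\Hbf^1_0(\T)$; since $\curl\Dtwo u=\zerobf$ elementwise and, for $w$ vanishing on $\partial\T$, $(\Dtwo u,\symcurlbold w)_\T=0$ for $u$ in $H^2(\Omega)$, one gets $(\curlsym\Hbbh,w)_\T=(\sym(\Hbbh-\Dtwo u),\symcurlbold w)_\T$, whence $\hT\Norm{\curlsym\Hbbh}{0,\T}\lesssim\Norm{\Dtwoh u-\Hbbh}{0,\T}$ with no data oscillation. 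For $\eta_{5,\T}$ I would test $\tauboldF^T\jump{\sym\Hbbh}$ against a scaled facet bubble supported in $\omegatildeF$, again using $\curl\Dtwo u=\zerobf$ and the single-valuedness of $\Dtwo u\,\tauboldF$ across $\F$, and absorb the volume contribution with the $\eta_{4}$-bound, obtaining $\hF^{1/2}\Norm{\tauboldF^T\jump{\sym\Hbbh}}{0,\F}\lesssim\Norm{\Dtwoh u-\Hbbh}{0,\omegatildeF}$. Summing the five contributions over $\T$ and using $\omegatildeF\subset\omegaT$ yields~\eqref{apos-bound-2}; this part uses only bubble functions of degree depending on $\p$ and never the $\mathcal{C}^1$ space, hence holds for all $\p\ge2$.

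\emph{Main obstacle.} The delicate point is the reliability of $T_1$: one must verify that applying~\eqref{Green-divdiv} elementwise to $\Hbbh$ reassembles \emph{exactly} into the residual and jump terms of $\eta_{1,\T},\eta_{2,\T},\eta_{3,\T}$ (relying on the insensitivity of those quantities to symmetrization) and, more fundamentally, guarantee the existence of a $\mathcal{C}^1$ quasi-interpolant into $\Vh\cap H^2_0(\Omega)$ that preserves vertex values with the stated local accuracy---which is exactly what forces $\p\ge5$ on general meshes and restricts $\p=3,4$ to HCT-refined meshes. A secondary subtlety occurs in $T_2$: the supremum over $\psibold$ must first be reduced to fields with $\SemiNorm{\psibold}{1,\Omega}\lesssim\Norm{\symcurlbold\psibold}{0,\Omega}$ via the stable right-inverse of Appendix~\ref{appendix:right-inverse} before property~\eqref{properties-GH:2} can be profitably invoked.
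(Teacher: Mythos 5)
Your proposal follows essentially the same route as the paper's: Galerkin orthogonality with a vertex-preserving $\mathcal{C}^1$ quasi-interpolant for the conforming error $T_1$, the stable right-inverse of $\symcurlbold$ combined with a $\mathcal{C}^0$ quasi-interpolant for the nonconforming error $T_2$, and bubble-function arguments for the five local terms of the estimator. One small inaccuracy in the efficiency part: the facet bubble used for $\eta_{3,\T}$ does not have vanishing normal derivative on $\F$ (it vanishes, together with its normal derivative, on $\partial\omegatildeF$, not on $\F$), so after Green's identity on the patch an extra cross term $(\nuboldF^T\jump{\Hbbh}\nuboldF,\partial_{\nuboldF}\wF)_{0,\F}$ appears, which the paper absorbs via the $\eta_{2,\T}$ bound rather than via $\eta_{1,\T}$; since you establish the $\eta_{2,\T}$ bound first, the fix is immediate.
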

The numerical results in Section~\ref{section:nr} below
indicate that bound~\eqref{apos-bound-1}
remains valid for $\p=2,3,4$ on any mesh.

\begin{remark} \label{remark:p-dependence-bounds}
It is possible to derive the explicit dependence of~$\crel$ and~$\ceff$
in~\eqref{apos-bound-1} and~\eqref{apos-bound-2}
in terms of~$\p$ following~\cite{Dong-Mascotto-Sutton:2021};
the corresponding proof boils down
to track the polynomial dependence in the polynomial inverse estimates
as in Section~\ref{subsection:technical-tools} below.
As discussed in~\cite{ChaumontFrelet:2025},
we expect the $\p$-dependence to be milder than
the corresponding one in the standard residual a posteriori
error analysis for IPDG schemes.
\eremk
\end{remark}

The remainder of this section is organized as follows.
After introducing technical tools
in Section~\ref{subsection:technical-tools}
we derive a posteriori error bounds
in Sections~\ref{subsection:upper-lower-bound-T1}
and~\ref{subsection:upper-lower-bound-T2},
for any generalized Hessian satisfying
the properties detailed in~\eqref{properties-GH}.
We prove that the generalized Hessian in~\eqref{generalized:Hessian}
satisfies~\eqref{properties-GH} in Section~\ref{subsection:properties:GH}.
In order to shorten the notation, we shall write $\Hbbh$.
A comparison with the standard IPDG residual error estimators
is given in Section~\ref{subsection:comparison-standard-ee}.

\subsection{Technical tools} 
\label{subsection:technical-tools}
This section is devoted to discussing several technical
tools, which will be instrumental
in the a posteriori error analysis.
Notably, we discuss:
the approximation properties
of $\mathcal C^0$ and $\mathcal C^1$
quasi-interpolants,
and a Korn-type inequality for the $\symcurlbb$ operator,
which are essential in deriving the reliability of the error estimator;
extension operators, bubble functions,
and polynomial inverse estimates,
which are needed in deriving the efficiency of the error estimator.

\paragraph*{$\mathcal C^0$ quasi-interpolants.}
Introduce the vector Cl\'ement
quasi-interpolant~$\thetabfI$
of~$\thetabf$ in $\Hbf^1(\Omega)$
as in~\cite{Ciarlet:1975}.
We have the standard approximation properties
\begin{equation} \label{Clement-qi}
\Norm{\thetabf-\thetabfI}{\T}
+ \hT \SemiNorm{\thetabf-\thetabfI}{1,\T}
\lesssim \hT \SemiNorm{\thetabf}{1,\omegaT},
\qquad\qquad
\Norm{\thetabf-\thetabfI}{\F}
\lesssim \hT^\frac12 \SemiNorm{\thetabf}{1,\omegaF}.
\end{equation}

\paragraph*{$\mathcal C^1$ quasi-interpolants
preserving nodal values.}
For $\p$ larger than or equal to~$5$,
consider the quasi-interpolant~$\vI$ in~$\VHth$
of a function~$v$~in $V$
as in \cite[Theorem~3.2]{Graessle:2022},
which is a modification of that in
\cite[Section~5]{Girault-Scott:2002}
and allows for exact pointwise interpolation
at the vertices of the mesh;
see also \cite[Theorem~2]{Carstensen-Hu:2021}.
The following interpolation estimates
can be derived from the results in \cite[Theorem~3.2]{Graessle:2022}
and standard trace inequalities:
for all elements~$\T$ and all~$\F$ in~$\FcalT$,
\begin{equation} \label{interpolation:C1}
\hT^{-2}  \Norm{v-\vI}{\T}
+ \sum_{\F \in \FcalT}
   \Big( \hF^{-\frac32} \Norm{v-\vI}{\F}
        + \hF^{-\frac12} \Norm{\nabla(v-\vI)}{\F}
   \Big)
\lesssim  \SemiNorm{v}{2,\omegaT}.
\end{equation}
Compared to other $\mathcal C^1$ quasi-interpolants,
e.g., that in \cite{Girault-Scott:2002},
the one we pick has the property
of interpolating functions at the vertices of the mesh
\cite[Theorem 3.2 (a)]{Graessle:2022}:
\begin{equation} \label{C1-vertices}
    (v-\vI)(\nu)=0
    \qquad\qquad\qquad
    \forall \nu \text{ vertex of $\tauh$}.
\end{equation}
With a similar reasoning,
for $\p=3,4$, and meshes as those discussed in
Remark~\ref{remark:C1-tests},
one can define an HCT quasi-interpolant
satisfying~\eqref{interpolation:C1}
and~\eqref{C1-vertices}.
In fact, the interpolant in~\cite{Girault-Scott:2002}
is based on nodal averaging employing
auxiliary $\L^2$ dual basis on patches/elements.
If $v$ belongs to $\H^2(\Omega)$, we may replace
averages with point values at the pointwise vertex
degrees of freedom.
This newly defined operator preserves polynomials
and is continuous in $\H^2(\Omega)$,
which is enough to derive the interpolation estimates.

\paragraph*{A Korn-type inequality for the $\symcurlbb$ operator.}
We state here a technical result,
which will be instrumental in deriving the reliability estimates
in Section~\ref{subsection:upper-lower-bound-T2} below.
It is adapted from \cite{Dorsek-Melenk:2013}
and its proof is postponed
to Appendix~\ref{appendix:tools-nonconforming} below.
We refer to it as a Korn-type inequality,
since $\symcurlbb \psibf$ has the same $\L^2$ norm
as that of the symmetric gradient of~$\phibf = (\psi_2,-\psi_1)$.

\begin{proposition} \label{proposition:tools-nc}
There exists a positive constant~$\CNL$ only depending
on the shape of the vertex patches of~$\taun$
and the shape-regularity parameter of the mesh
such that, for all $\psibf$ in $\Hbf(\symcurlbb,\Omega)$
(which coincides with $\Hbf^1(\Omega)$),
there exists $\psibfh$ in $\VHoh$ with
\begin{equation} \label{eq:tools-nc}
\SemiNorm{\psibf-\psibfh}{1,\Omega}
\le \CNL
    \Norm{\symcurlbb\psibf}{\Omega}.
\end{equation}
\end{proposition}

\begin{remark}
As discussed in \cite[Remark~5]{Dorsek-Melenk:2013}
in an analogous context, the dependence of the constant~$\CNL$
in~\eqref{eq:tools-nc} is difficult to check in practice
on general meshes.
However, if newest-vertex bisection is used,
then only a finite number of patch-types are produced
by the refinement procedure.
\eremk
\end{remark}

\paragraph*{A facet extension operator.}
We shall be considering
extension operators $\Ecal_\F = \Ecal : \L^2(\F) \to \L^2(\omegatildeF)$
for all~$\F$ in~$\Fcaln$.
Such operators are defined so as to satisfy
\begin{equation} \label{extension-operator}
\Ecal v (x) = v(x') \text{ in } \T, \quad \T \subset \omegaF,
\text{ if } x'\in \F \text{ is such that }
\lambda^\T(x') = \lambda^\T(x),
\end{equation}
where $\lambda^\T$ is any of the two barycentric
coordinates of~$\T$, which is not associated
with the facet~$\F$, cf. \cite{Verfurth:2013}.
In particular, this operator acts as the identity on the facet~$\F$.

\paragraph*{$\Ccal^0$ bubble functions and inverse estimates.}
In the treatment of the lower bound for the nonconforming
part of the error, see Section~\ref{subsection:upper-lower-bound-T2},
we shall be using two different bubbles.
The first one is defined on the element~$\T$ and is given by
\begin{equation} \label{C0-bubble-bulk}
    \btildeT:=
    27 \ \Pi _{j=1}^3 \lambda_j^\T,
\end{equation}
where the~$\lambda_j^\T$, $j=1,2,3$,
are the barycentric coordinates of~$\T$.
We have \cite{Verfurth:2013}
the standard inverse estimates
\begin{equation} \label{C0-bubble-bulk-IE}
\Norm{\qp}{\T} 
    \lesssim \Norm{\btildeT^\frac12 \qp}{\T},
\qquad
\SemiNorm{\btildeT \qp}{1,\T} 
    \lesssim \hT^{-1} \Norm{\qp}{\T} 
\qquad\qquad\qquad
\forall \qp \in P_\p(\T).
\end{equation}
The second one is defined on the patch~$\omegatildeF$,
see~\eqref{patches},
for a given facet~$\F$ and is given by
\begin{equation} \label{C0-bubble-facet}
\btildeF{}_{|\T} = 4\ \Pi_{j=1}^2 \lambda_j^\T
\qquad\qquad\qquad \forall \T \subset \omegatildeF,
\end{equation}
where $\lambda_j^\T$, $j=1,2,3$,
are the barycentric coordinates of~$\T$
subset of~$\omegatildeF$,
$j=3$ being the index related to the facet~$\F$.
Given $\Ecal$ as in~\eqref{extension-operator},
we have \cite{Verfurth:2013} the inverse estimates
\small\begin{equation} \label{C0-bubble-facet-IE}
\Norm{\qp}{\F} 
    \lesssim \Norm{\btildeF^\frac12 \qp}{\F},
\quad
\SemiNorm{\btildeF \Ecal(\qp)}{1,\T} 
+ \hT^{-1}\Norm{\btildeF \Ecal(\qp)}{\T} 
    \lesssim \hF^{\frac12} \Norm{\qp}{\F} 
\qquad  \forall \qp \in P_\p(\F).
\end{equation} \normalsize

\paragraph*{$\Ccal^1$ bubble functions and inverse estimates.}
In the treatment of the lower bound for the conforming
part of the error, see Section~\ref{subsection:upper-lower-bound-T1},
we shall be using three different bubbles.
The first one is defined on the element~$\T$ and,
for $\btildeT$ as in~\eqref{C0-bubble-bulk},
is given by
\begin{equation} \label{C1-bubble-bulk}
    \bT:= \btildeT^2.
\end{equation}
In particular, this is a $\Ccal^0$ bubble
with vanishing normal component
of the gradient and trace over~$\partial\T$.
We have \cite{Verfurth:2013}
the standard inverse estimates
\begin{equation} \label{C1-bubble-bulk-IE}
\Norm{\qp}{\T} 
    \lesssim \Norm{\bT^\frac12 \qp}{\T},
\qquad
\SemiNorm{\bT \qp}{2,\T} 
    \lesssim \hT^{-2} \Norm{\qp}{\T} 
\qquad\qquad\qquad
\forall \qp \in P_\p(\T).
\end{equation}
The second one is associated with the facet~$\F$ and,
for $\btildeF$ as in~\eqref{C0-bubble-facet}, is given by
\begin{equation} \label{C1-bubble-facet}
    \bF:= \btildeF^2.
\end{equation}
In particular, this is a $\Ccal^1$ bubble
vanishing and with vanishing normal component
of the gradient over~$\partial\omegatildeF$.
We have \cite{Verfurth:2013}
the standard inverse estimates:
given $\Ecal$ as in~\eqref{extension-operator},
\begin{equation} \label{C1-bubble-facet-IE}
\begin{split}
& \Norm{\qp}{\F} 
    \lesssim \Norm{\bF^\frac12 \qp}{\F},
\qquad
\hF^2 \SemiNorm{\bF \Ecal(\qp)}{2,\T} 
+ \Norm{\bF \Ecal(\qp)}{\T} 
    \lesssim \hF^{\frac12} \Norm{\qp}{\F} ,\\
& \Norm{\partial_{\nuboldF} (\bF \Ecal(\qp))}{\F}
    \lesssim \hF^{-1} \Norm{\qp}{\F}
\qquad\qquad  \forall \qp \in P_\p(\F).
\end{split}
\end{equation}
The third one is less standard,
see \cite[Lemma 4.3]{Brenner-Gudi-Sung:2010}
and \cite[Lemma 6]{Carstensen-Gallistl-Gedicke:2019},
and is defined on the patch~$\omegatildeF$ as in~\eqref{patches}.
Given $\Ecal$ as in \eqref{extension-operator} and
$\bstarF = \btildeF^2 \nuboldF \cdot (\xbf-\xbf_\F)$
where~$\xbf_\F$ denotes the midpoint of~$\F$,
cf. \cite[eq. (10)]{Carstensen-Gallistl-Gedicke:2019},
there holds
\begin{equation}\label{C1-bubble-facet*}
    \bstarF{}_{|\partial\omegaF\cup\F}=0,
    \qquad\qquad
    \Norm{\qp}{\F} \lesssim \Norm{( \partial_{\nuboldF}\bstarF)^\frac12 \qp}{\F}
    \quad \forall \qp \in P_\p(\F).
\end{equation}
For all $\qp$ in $P_\p(\F)$,
there exists a function~$\wF$
in $\H^2_0(\omegatildeF) \subset \H^2_0(\Omega)$
piecewise in $P_{\p+4}(\T)$,
$\T$ in $\omegatildeF$, such that
\small\begin{equation}\label{C1-bubble-facet*-IE-1}
\begin{split}
&   \wF:= \bstarF \Ecal(\qp)
    \text{ on } \omegaF,
    \qquad\qquad
    \partial_{\nuboldF} \wF
    = \partial_{\nuboldF}\bstarF \Ecal(\qp)
        + \bstarF \partial_{\nuboldF} \Ecal(\qp) 
    = \partial_{\nuboldF}\bstarF \Ecal(\qp)
    \text{ on } \F, \\
&   \wF = 0 \text{ on } \F,
    \qquad\qquad
    \Norm{\wF}{\omegatildeF} + \hF^2\SemiNorm{\wF}{2,\omegatildeF}
    \lesssim \hF^\frac32 \Norm{\qp}{\F} .
\end{split}
\end{equation}

\subsection{Upper and lower bounds of the conforming part of the error} 
\label{subsection:upper-lower-bound-T1}

\paragraph*{The upper bound.}
For all~$v$ in~$V$ with $\SemiNorm{v}{2,\Omega}=1$
and given the $\Ccal$ quasi-interpolant~$\vI$ as in~\eqref{interpolation:C1},
we have
\[
\begin{split}
& (\Dtwo(u-s), \Dtwo v)_{\Omega}
 \overset{\eqref{definition:s}}{=}
 (\Dtwo u - \Hbbh, \Dtwo v)_{\Omega}
  \overset{\eqref{properties-GH:1}}{=}
    (\Dtwo u - \Hbbh, \Dtwo (v-\vI))_{\Omega} \\
& \overset{\eqref{Green-divdiv}}{=}
    \sum_{\T\in\tauh} (\f-\div\divbf \Hbbh, v-\vI)_{\T}
    - \sum_{\F\in\FcalnI} (\nuboldF^T \jump{\Hbbh}, \nabla(v-\vI))_{\F}
    + \sum_{\F\in\FcalnI} (\nuboldF^T \jump{\divbf\Hbbh}, v-\vI)_{\F}\\
& \overset{\eqref{C1-vertices}}{=}
    \sum_{\T\in\tauh} (\f-\div\divbf \Hbbh, v-\vI)_{\T}
    - \sum_{\F\in\FcalnI} (\nuboldF^T\jump{\Hbbh}\nuboldF,
    \nuboldF^T\nabla(v-\vI))_{\F} \\
& \qquad
    + \sum_{\F\in\FcalnI} (\partialtbfF(\tauboldF^T \jump{\Hbbh}\nuboldF)
    + \nuboldF^T \jump{\divbf\Hbbh}, v-\vI)_{\F}.
\end{split}
\]\normalsize
Further using Cauchy-Schwarz' inequality
(first for integrals, then for sequences)
and the approximation properties~\eqref{interpolation:C1}
of $\mathcal C^1$ quasi-interpolants,
and recalling that $\SemiNorm{v}{2,\Omega}=1$
yield
\[
\begin{split}
(\Dtwo(u-s), \Dtwo v)_{\Omega}
& \lesssim  \Big[  \sum_{\T\in\tauh} \hT^4 \Norm{\f-\div\divbf \Hbbh}{\T}^2
    + \sum_{\F\in\FcalnI} \Big( \hF \Norm{\nuboldF^T\jump{\Hbbh}\nuboldF}{\F}^2 \\
& \qquad     
    + \hF^3 \Norm{\partialtbfF(\tauboldF^T \jump{\Hbbh}\nuboldF)
    + \nuboldF^T \jump{\divbf\Hbbh}}{\F}^2 \Big)
    \Big]^\frac12 
    \overset{\eqref{local-error-estimator}}{\le}
    \eta.
\end{split}
\]

\paragraph*{The lower bound.}
We begin with estimating the elemental terms.
Given $\bT$ as in~\eqref{C1-bubble-bulk}, we observe that
\[
\begin{split}
\Norm{\fh-\div\divbf \Hbbh}{\T}^2
& \lesssim (\fh-\div\divbf \Hbbh, \underbrace{\bT^2 (\fh-\div\divbf \Hbbh)}_{=:\wT})_{\T} \\
& \overset{\eqref{problem:weak}}{=} 
    (\f-\fh,\wT)_{\T} + (\Dtwo u - \Hbbh, \Dtwo \wT)_{\T} \\
& \le \Norm{\f-\fh}{\T} \Norm{\wT}{\T} +  \Norm{\Dtwo u - \Hbbh}{\T} \Norm{\Dtwo\wT}{\T}\\
& \overset{\eqref{C1-bubble-bulk-IE}}{\lesssim} 
    \Norm{\f-\fh}{\T} \Norm{\wT}{\T} 
        + \Norm{\Dtwo u - \Hbbh}{\T} \hT^{-2} \Norm{\wT}{\T}\\
&   \overset{\eqref{C0-bubble-bulk}}{\le}
    (\Norm{\f-\fh}{\T} + \hT^{-2} \Norm{\Dtwo u - \Hbbh}{\T})
        \Norm{\fh-\div\divbf \Hbbh}{\T}. 
\end{split}
\]
For all~$\T$ in~$\tauh$, we deduce
\[
\hT^2 \Norm{\fh-\div\divbf \Hbbh}{\T}
\lesssim \hT^2 \Norm{\f-\fh}{\T} + \Norm{\Dtwo u - \Hbbh}{\T},
\]
and, by the triangle inequality,
\begin{equation} \label{lower-bound:c-bulk}
\hT^2 \Norm{\f-\div\divbf \Hbbh}{\T}
\lesssim \hT^2 \Norm{\f-\fh}{\T} + \Norm{\Dtwo u - \Hbbh}{\T} .
\end{equation}
Next, we focus on the normal-normal component term
appearing in~\eqref{local-error-estimator}.
Given $\Ecal$ and $\bstarF$ as in
\eqref{extension-operator} and~\eqref{C1-bubble-facet*}
(here $\nubold_{\F}^T \jump{\Hbbh} \nubold_{\F}$
plays the role of $\qp$), we write
\small\[
\begin{split}
& \Norm{\nubold_{\F}^T \jump{\Hbbh} \nubold_{\F}}{\F}^2
 \overset{\eqref{C1-bubble-facet*}}{\lesssim}
 (\nubold_{\F}^T \jump{\Hbbh} \nubold_{\F},
 \partial_{\nuboldF}\bstarF
    \Ecal(\nubold_{\F}^T \jump{\Hbbh} \nubold_{\F}))_{\F}
    \overset{\eqref{C1-bubble-facet*-IE-1}}{=}  
    (\nubold_{\F}^T \jump{\Hbbh} \nubold_{\F},
    \partial_{\nuboldF} \wF)_{\F}\\
& \overset{\eqref{C1-bubble-facet*}}{=}
    \sum_{j=1}^2
    (\nubold_{\Tj}^T \Hbbh \nubold_{\Tj},
    \partial_{\nubold_{\Tj}} \wF)_{\partial\Tj} 
 \overset{\eqref{C1-bubble-facet*-IE-1}}{=}
        -(\divdivbf \Hbbh, \wF)_{\omegatildeF}
        + (\Hbbh, \Dtwo \wF)_{\omegatildeF} \\
& \overset{\eqref{problem:weak}}{=}
    -(\divdivbf \Hbbh - f, \wF)_{\omegatildeF}
        + (\Hbbh-\Dtwo u, \Dtwo \wF)_{\omegatildeF} \\
&   \overset{\eqref{C1-bubble-facet*-IE-1}}{\lesssim}
    \big(\hF^2\Norm{\f-\divdivbf\Hbbh}{\omegatildeF}
    + \Norm{\Dtwo u - \Hbbh}{\omegatildeF} \big)
    \hF^{-\frac12} \Norm{\nubold_{\F}^T \jump{\Hbbh} \nubold_{\F}}{\F}.
\end{split}    
\] \normalsize
We simplify the display above into
\begin{equation} \label{bound:normal-normal}
\h^\frac12 \Norm{\nubold_{\F}^T \jump{\Hbbh} \nubold_{\F}}{\F}
\overset{\eqref{lower-bound:c-bulk}}{\lesssim}
    \hT^2 \Norm{\f-\fh}{\omegatildeF} 
        + \Norm{\Dtwo u - \Hbbh}{\omegatildeF}.
\end{equation}
Finally, we handle the facet terms involving the normal component of the divergence
and the normal-tangential component of the generalized Hessian.
Let~$\F$ be shared by the elements~$\To$ and~$\Ttw$.
Given $\Ecal$ and $\bF$ as in
\eqref{extension-operator} and~\eqref{C1-bubble-facet}, we have
\small\[
\begin{split}
& \Norm{\partialtbfF(\tauboldF^T \jump{\Hbbh}\nuboldF)
    + \nuboldF^T \jump{\divbf\Hbbh}}{\F}^2\\
&  \overset{\eqref{C1-bubble-facet-IE}}{\lesssim}
    (\partialtbfF(\tauboldF^T \jump{\Hbbh}\nuboldF)
    + \nuboldF^T \jump{\divbf\Hbbh},
    \underbrace{\bF \Ecal(\partialtbfF(\tauboldF^T \jump{\Hbbh}\nuboldF)
    + \nuboldF^T \jump{\divbf\Hbbh})}_{=:\wF})_{\F}\\
&  \overset{\eqref{Green-divdiv}}{=}
    \sum_{j=1}^2 \Big[ +(\divdivbf \Hbbh, \wF)_{\Tj}
        - ( \Hbbh, \Dtwo \wF)_{\Tj} \\
&  \qquad\qquad + \sum_{\Ftilde \in\Fcal^{\Tj}} \sum_{\nu \in \mathcal V^{\Ftilde}}
                (\taubold_{\Tj}^T \Hbbh \nubold_{\Tj})(\nu) 
                    \underbrace{\wF(\nu)}_{=0}
    + \sum_{\Ftilde \in \Fcal^{\Tj}}
    (\nubold_{\Tj}^T \Hbbh \nubold_{\Tj},
    \underbrace{\partial_{\nubold_{\Tj}} \wF}_{=0 \text{ if $\Ftilde \ne \F$}})_{\Ftilde} \Big]    \\
& \overset{\eqref{problem:weak}}{=}
    (-\f +\divdivbfh \Hbbh, \wF)_{\omegaF}
    + (-\Hbbh + \Dtwo u, \Dtwo \wF)_{\omegaF}
    + (\nubold_{\F}^T \jump{\Hbbh} \nubold_{\F},
    \partial_{\nubold_{\F}} \wF)_{\F}
 =: S_1 + S_2 +S_3.
\end{split}
\] \normalsize
We estimate the three terms on the right-hand side separately.
As for the term~$S_1$, we write
\[
\begin{split}
S_1
& \le \Norm{\f-\divdivbfh\Hbbh}{\omegaF} \Norm{\wF}{\omegaF}\\
& \overset{\eqref{C1-bubble-facet-IE}}{\lesssim}
    \Norm{\f-\divdivbfh\Hbbh}{\omegaF}
    \hF^{\frac12} \Norm{\partialtbfF(\tauboldF^T \jump{\Hbbh}\nuboldF)
    + \nuboldF^T \jump{\divbf\Hbbh}}{\F} \\
& \overset{\eqref{lower-bound:c-bulk}}{\lesssim}
    \hF^{-\frac32} \big[ \hT^2 \Norm{\f-\fh}{\omegaF}
                + \Norm{\Dtwo u - \Hbbh}{\omegaF} \big]
    \Norm{\partialtbfF(\tauboldF^T \jump{\Hbbh}\nuboldF)
    + \nuboldF^T \jump{\divbf\Hbbh}}{\F} .
\end{split}
\]
As for the term~$S_2$, we have
\small\[
\begin{split}
S_2
& \le \Norm{\Dtwo u - \Hbbh}{\omegaF} \Norm{\Dtwo \wF}{\omegaF} 
\overset{\eqref{C1-bubble-facet-IE}}{\lesssim}
    \hF^{-\frac32} \Norm{\Dtwo u - \Hbbh}{\omegaF}
          \Norm{\partialtbfF(\tauboldF^T \jump{\Hbbh}\nuboldF)
    + \nuboldF^T \jump{\divbf\Hbbh}}{\F} .
\end{split}
\]\normalsize
We now focus on the term~$S_3$:
\small\[
\begin{split}
S_3
& \le  \Norm{\nubold_{\F}^T \jump{\Hbbh} \nubold_{\F}}{\F}
        \Norm{\partial_{\nubold_{\F}} \wF}{\F}
\overset{\eqref{C1-bubble-facet-IE}}{\lesssim}
    \hF^{\frac12}\Norm{\nubold_{\F}^T \jump{\Hbbh} \nubold_{\F}}{\F}
            \hF^{-\frac32} \Norm{\partialtbfF(\tauboldF^T \jump{\Hbbh}\nuboldF)
    + \nuboldF^T \jump{\divbf\Hbbh}}{\F}.
\end{split}
\]\normalsize
Collecting the four displays above yields
\begin{equation} \label{lower-bound:c-facet-1}
\begin{split}
& \hF^{\frac32} \Norm{\partialtbfF(\tauboldF^T \jump{\Hbbh}\nuboldF)
    + \nuboldF^T \jump{\divbf\Hbbh}}{\F}\\
& \lesssim  \hT^2 \Norm{\f-\fh}{\omegaF} 
    + \Norm{\Dtwo u - \Hbbh}{\omegaF} 
    + \hF^{\frac12}\Norm{\nubold_{\F}^T \jump{\Hbbh} \nubold_{\F}}{\F}\\
& \overset{\eqref{bound:normal-normal}}{\lesssim}
    \hT^2 \Norm{\f-\fh}{\omegaF} 
    + \Norm{\Dtwo u - \Hbbh}{\omegaF} .
\end{split}
\end{equation}
The lower bound for the conforming part of the error is then
a consequence of~\eqref{lower-bound:c-bulk},
\eqref{bound:normal-normal},
and~\eqref{lower-bound:c-facet-1}.

\subsection{Upper and lower bounds of the nonconforming part of the error}
\label{subsection:upper-lower-bound-T2}
\paragraph*{The upper bound.}
We pick $\thetabf$ equal to $\psibf-\psibfh$,
$\psibfh$ as in Proposition~\ref{proposition:tools-nc},
and~$\thetabfI$ equal to the vector Cl\'ement quasi-interpolant
of~$\thetabf$ in $\Hbf^1(\Omega)$ as in~\eqref{Clement-qi}, and write
\[
\begin{split}
&   (\Hbbh, \symcurlbb \psibf)_{\Omega}
    \overset{\eqref{property-GH-2-bis}}{=}
    (\Hbbh, \symcurlbb (\psibf-\psibfh))_{\Omega}
    =: (\Hbbh, \symcurlbb \thetabf)_{\Omega}\\
& \overset{\eqref{property-GH-2-bis}}{=}  
    (\Hbbh, \symcurlbb (\thetabf-\thetabfI))_{\Omega}
    =  (\sym\Hbbh, \curlbb (\thetabf-\thetabfI))_{\Omega}\\
& \overset{\eqref{Clement-qi}}{\lesssim}
    \Big[ \sum_{\T\in\tauh} \hT^2 \Norm{\curlsym\Hbbh}{\T}^2
    + \sum_{\F\in\Fcaln} \hF \Norm{\tauboldF^T \jump{\sym\Hbbh}}{\F}^2
        \Big]^\frac12 \SemiNorm{\thetabf}{1,\Omega} \\
& \overset{\eqref{eq:tools-nc}}{\lesssim}     
    \Big[ \sum_{\T\in\tauh} \hT^2 \Norm{\curlsym\Hbbh}{\T}^2
    + \sum_{\F\in\Fcaln} \hF \Norm{\tauboldF^T \jump{\sym\Hbbh}}{\F}^2 \Big]^\frac12 
    \underbrace{\Norm{\symcurlbb \psibf}{\Omega}}_{=1}.
\end{split}
\]

\paragraph*{The lower bound.}
We begin with estimating the elemental terms.
For all~$\T$ in~$\tauh$, given $\btildeT$ as in \eqref{C0-bubble-bulk},
we have
\[
\begin{split}
\Norm{\curlsym \Hbbh}{\T}^2
& \lesssim (\curlsym \Hbbh,
        \underbrace{\btildeT \curlsym \Hbbh}_{=:\wbfT})_{\T} 
    = (\curl (\sym\Hbbh-\Dtwo u), \wbfT )_{\T} \\
& = (\sym \Hbbh-\Dtwo u, \curlbb \wbfT)_{\T}
  \le \Norm{\Dtwo u - \Hbbh}{\T} \Norm{\curlbb \wbfT}{\T}.
\end{split}
\]
By noting that
\[
\Norm{\curlbb \wbfT}{\T}
\lesssim \SemiNorm{\wbfT}{1,\T}
\overset{\eqref{C0-bubble-bulk-IE}}{\lesssim}
    \hT^{-1}  \Norm{\curlsym \Hbbh}{\T},
\]
we deduce, for all elements~$\T$,
\begin{equation} \label{lower-bound:nc-bulk}
\hT  \Norm{\curlsym \Hbbh}{\T}
\lesssim  \Norm{\Dtwo u - \Hbbh}{\T}.
\end{equation}
We now focus on the facet terms.
We focus on internal facets~$\F$
shared by the elements~$\To$ and~$\Ttw$;
the analysis for boundary facets is dealt with analogously.
Given $\Ecal$ and $\btildeF$ as in
\eqref{extension-operator} and \eqref{C0-bubble-facet},
we have
\[
\begin{split}
\Norm{\tauboldF^T \jump{\sym\Hbbh}}{\F}^2
& \overset{\eqref{C0-bubble-facet-IE}}{\lesssim}
    (\tauboldF^T \jump{\sym\Hbbh},
    \underbrace{\btildeF \Ecal(\tauboldF^T \jump{\sym\Hbbh})}_{=:\wbfF})_{\F} \\
& = \sum_{j=1}^2 \big[ (\curlsym \Hbbh, \wbfF)_{\Tj}
                        + (\sym \Hbbh, \curlbb \wbfF)_{\Tj} \big] \\
& = (\curlhsym \Hbbh, \wbfF)_{\omegatildeF}
                        + (\sym \Hbbh, \curlbb \wbfF)_{\omegatildeF}
  =: S_1 + S_2.
\end{split}
\]
We estimate the two terms on the right-hand side separately.
As for the term~$S_1$, we note that
\[
\begin{split}
S_1
& \lesssim  \hF \Norm{\curlhsym \Hbbh}{\omegaF} 
            \hF^{-1} \Norm{\wbfF}{\omegaF}  \\
& \overset{\eqref{C0-bubble-facet-IE}, \eqref{lower-bound:nc-bulk}}{\lesssim}
    \Norm{\Dtwo u - \Hbbh}{\T} \hF^{-\frac12} \Norm{\wbfF}{\F} 
  \le \Norm{\Dtwo u - \Hbbh}{\T} \hF^{-\frac12} \Norm{\tauboldF^T \jump{\sym\Hbbh}}{\F}.
\end{split}
\]
As for the term~$S_2$, using the fact that each column of~$\Dtwo u$ belongs
to~$\Hbf(\curlbb^{\zerobf},\Omega)$ and an integration by parts, we deduce
\[
\begin{split}
S_2
& = (\sym \Hbbh - \Dtwo u, \curlbb \wbfF)_{\omegaF}
  \le \Norm{\Dtwo u - \Hbbh}{\omegaF}  \Norm{\curlbb\wbfF}{\omegaF} \\
& \overset{\eqref{C0-bubble-facet-IE}}{\lesssim}
    \Norm{\Dtwo u - \Hbbh}{\omegaF} \hF^{-\frac12} \Norm{\wbfF}{\F} 
   \le \Norm{\Dtwo u - \Hbbh}{\omegaF} \hF^{-\frac12} \Norm{\tauboldF^T \jump{\sym\Hbbh}}{\F} .
\end{split}
\]
We combine the displays above and get
\begin{equation} \label{lower-bound:nc-facet}
\hF^{\frac12} \Norm{\tauboldF^T \jump{\sym\Hbbh}}{\F}
\lesssim \Norm{\Dtwo u - \Hbbh}{\omegaF}.
\end{equation}
The lower bound for the nonconforming part of the error
follows from~\eqref{lower-bound:nc-bulk}
and~\eqref{lower-bound:nc-facet}.

\subsection{Properties of the generalized Hessian}
\label{subsection:properties:GH}
Here, we prove
that the generalized Hessian in~\eqref{generalized:Hessian}
satisfies~\eqref{properties-GH:1} and~\eqref{properties-GH:2}.
Identity~\eqref{properties-GH:1}
is a consequence of the following computations:
for all $\wh$ in $\Vh\cap \H^2_0(\Omega)$,
\[
\begin{split}
(\Hbbh(\uh), \Dtwo \wh)_{\Omega}
\overset{\eqref{generalized:Hessian}}{=}
    (\Dtwoh\uh + \Lbbh(\uh), \Dtwo \wh)_{\Omega}
  \overset{\wh\in \H^2_0(\Omega),
        \eqref{IPDG-bilinear-form}}{=}
        \Bh(\uh,\wh)
    \overset{\wh\in\Vh, \eqref{method}}{=}
    (\f,\wh).
\end{split}
\]
The proof of identity~\eqref{properties-GH:2}
is more involved and is intrinsically related
to the proof of Green's identity~\eqref{Green-divdiv}.
We shall be equivalently proving that
\begin{equation} \label{equivalent-version-2nd-property}
(\Dtwoh \uh, \Abbh)_{\Omega}
= -(\Lbbh(\uh), \Abbh)_{\Omega},
\qquad\qquad\qquad
\forall \Abbh \in \underline{\mathbf P}_\p(\taun) \cap \Hbb(\divdivbfz,\Omega).
\end{equation}
As for the left-hand side of~\eqref{equivalent-version-2nd-property},
we integrate by parts elementwise twice and get
\[
\begin{split}
(\Dtwoh \uh, \Abbh)_{\Omega}
= \sum_{\T \in \tauh}
    \big[ (\uh, \divdivbf \Abbh)_{\T}
         + (\nabla\uh, \Abbh \nuboldT)_{\partial\T}
         - (\uh, \nuboldT^T (\divbf\Abbh))_{\partial\T}   \big].
\end{split}
\]
The bulk terms vanish since $\divdivbf\Abbh=0$ over~$\Omega$.
Consider the decomposition into tangential and normal parts
\begin{equation} \label{normal-tangential:splitting}
\Abbh \nuboldT
= (\nuboldT^T\Abbh\nuboldT)\nuboldT
  + (\tauboldT^T\Abbh\nuboldT)\tauboldT .
\end{equation}
This allows us to write
\small\[
(\Dtwoh \uh, \Abbh)_{\Omega}
= \sum_{\T \in \tauh}
    \big[ (\nuboldT^T \nabla\uh, \nuboldT^T\Abbh\nuboldT)_{\partial\T}
    + (\tauboldT^T \nabla\uh, \tauboldT^T\Abbh\nuboldT)_{\partial\T}
    - (\uh, \nuboldT^T (\divbf\Abbh))_{\partial\T}   \big] .
\]\normalsize
As for the second terms, we observe that
\[
(\tauboldT^T \nabla\uh, \tauboldT^T\Abbh\nuboldT)_{\partial\T}
= (\partialtbfOmega \uh, \tauboldT^T\Abbh\nuboldT)_{\partial\T}.
\]
With the notation as in~\eqref{Green-divdiv},
a tangential integration by parts on~$\partial\T$ gives
\[
(\tauboldT^T \nabla\uh, \tauboldT^T\Abbh\nuboldT)_{\partial\T}
= - (\uh, \partialtbfT (\tauboldT^T\Abbh\nuboldT))_{\partial\T}
   + \sum_{\F \in \Fcaln}  \sum_{\nu \in \VcalF}
    \sign_{\F,\nu} \uh(\nu) (\tauboldT^T \Abbh \nuboldT)(\nu) .
\]
Overall, we get
\begin{equation} \label{identities-Dtwouh}
\begin{split}
(\Dtwoh \uh, \Abbh)_{\Omega}
& = \sum_{\T \in \tauh}
    \big[ (\nuboldT^T \nabla\uh, \nuboldT^T\Abbh\nuboldT)_{\partial\T}
    - (\uh, \nuboldT^T \divbf\Abbh
    + \partialtbfT( \tauboldT^T \Abbh \nuboldT))_{\partial\T} \big] \\
& \quad + \sum_{\F \in \Fcaln}  \sum_{\nu \in \VcalF}
    \sign_{\F,\nu} \uh(\nu) (\tauboldF^T \Abbh \nuboldF)(\nu).
\end{split}
\end{equation}
We focus on the term on the
right-hand side of~\eqref{equivalent-version-2nd-property}.
The definition of the lifting operator in~\eqref{lifting}
and the splitting in~\eqref{normal-tangential:splitting} imply
\small\[
\begin{split}
& (\Lbbh(\uh), \Abbh)_{\Omega} \\
& = \Big(\jump{\uh}, 
    \average{\nuboldF^T \divbf \Abbh}\Big)_{\Fcaln}
  - \big(\jump{\nabla \uh}, 
    \average{\Abbh \nuboldF} \big)_{\Fcaln} \\
& = \Big( \jump{\uh}, 
    \average{\nuboldF^T \divbf \Abbh} \Big)_{\Fcaln}
  - \Big( \jump{\nuboldF^T \nabla \uh}, 
    \average{\nuboldF^T\Abbh \nuboldF} \Big)_{\Fcaln}
  - \Big( \jump{\tauboldF^T \nabla \uh}, 
    \average{\tauboldF^T\Abbh \nuboldF} \Big)_{\Fcaln}.
\end{split}
\] \normalsize
The continuity of~$\nuboldF^T\Abbh \nuboldF$ through facets,
cf. the $\Hbb(\divdivbf)$ conformity of $\Abbh$,
implies
\[
\begin{split}
 (\Lbbh(\uh), \Abbh)_{\Omega} 
& = \big( \jump{\uh}, 
    \average{\nuboldF^T \divbf \Abbh} \big)_{\Fcaln}
    - \big(\jump{\nuboldF^T \nabla \uh}, 
    \nuboldF^T\Abbh \nuboldF \big)_{\Fcaln} \\
& \quad  - \big( \jump{\tauboldF^T \nabla \uh}, 
    \average{\tauboldF^T\Abbh \nuboldF} \big)_{\Fcaln}.
\end{split}
\]
We rewrite the last term on the right-hand side,
also using an integration by parts on the boundaries
of the elements and standard DG manipulations, as
\[
\begin{split}
& - (\jump{\tauboldF^T \nabla \uh}, 
    \average{\tauboldF^T\Abbh \nuboldF})_{\Fcaln} 
= - (\jump{\partialtbfF \uh}, 
    \average{\tauboldF^T\Abbh \nuboldF})_{\Fcaln}\\
& = (\jump{\uh}, 
    \average{\partialtbfF  (\tauboldF^T\Abbh \nuboldF)})_{\Fcaln}
    - \sum_{\F \in \Fcaln}  \sum_{\nu \in \VcalF}
    \sign_{\F,\nu} \uh(\nu) (\tauboldF^T \Abbh \nuboldF)(\nu).
\end{split}
\]
Combining the above displays yields
\[
\begin{split}
(\Lbbh(\uh), \Abbh)_{\Omega}
& = (\jump{\uh}, 
    \average{\nuboldF^T \divbf \Abbh
            + \partialtbfF  (\tauboldF^T\Abbh \nuboldF)})_{\Fcaln} \\
& \quad 
    - (\jump{\nuboldF^T \nabla \uh}, 
    \nuboldF^T\Abbh \nuboldF)_{\Fcaln} 
    - \sum_{\F \in \Fcaln}  \sum_{\nu \in \VcalF}
    \sign_{\F,\nu} \uh(\nu) (\tauboldF^T \Abbh \nuboldF)(\nu).
\end{split}
\]
The continuity of~$\nuboldF^T \divbf \Abbh
+ \partialtbfF  (\tauboldF^T\Abbh \nuboldF)$ through facets,
cf. the $\Hbb(\divdivbf)$ conformity of~$\Abbh$,
implies
\begin{equation} \label{identities-Lcalh}
\begin{split}
-(\Lbbh(\uh),& \Abbh)_{\Omega}
= - (\jump{\uh}, 
    \nuboldF^T \divbf \Abbh
            + \partialtbfF  (\tauboldF^T\Abbh \nuboldF))_{\Fcaln} \\
& \quad 
    + (\jump{\nuboldF^T \nabla \uh}, 
    \nuboldF^T\Abbh \nuboldF)_{\Fcaln} 
    + \sum_{\F \in \Fcaln}  \sum_{\nu \in \VcalF}
    \sign_{\F,\nu} (\tauboldF^T \Abbh \nuboldF)(\nu) \uh(\nu).
\end{split}
\end{equation}
Up to rewriting the sum over the facets
as sums over the boundary of elements,
the right-hand sides in~\eqref{identities-Dtwouh} and~\eqref{identities-Lcalh}
coincide.
Identity~\eqref{equivalent-version-2nd-property}
and thus also~\eqref{properties-GH:2} follow.

\subsection{Comparison with the standard residual error estimator}
\label{subsection:comparison-standard-ee}
The aim of this section is to show that the
reliable and efficient error estimator
in~\eqref{global-error-estimator} based on $\Hbbhuh$
can be estimated from above by standard
DG residual error estimators available in the literature.

To see this, we recall the general order error estimator
in \cite[Definition 4.1]{Dong-Mascotto-Sutton:2021}
(\cite[eq. (4.1)]{Georgoulis-Houston-Virtanen:2011} is concerned
with for biharmonic problem in Laplace-Laplace formulation)\footnote{The first and sixth terms in display \eqref{estimator:DMS}
are written equivalently in \cite{Dong-Mascotto-Sutton:2021}
as (up to the scaling factors)
$\Norm{\f-\Delta^2 \uh}{\T}$
since for smooth functions $\Delta^2 \cdot = \divdivbf\Dtwo \cdot$
and $\Norm{\jump{\nablah \Delta^2_{\mathrm \h} \uh} \cdot \nuboldF}{\F}$
since $\nablah\Delta_{\mathrm \h}^2\cdot = \divbf_{\mathrm \h} \Dtwoh$.}:
\begin{subequations}
\label{DG-error-estimator}
\begin{equation} \label{estimator:DMS}
\begin{split}
\gimelT ^2
:= & \sum_{\ell=1}^6 \gimel_{\ell,\T}^2
    := \hT^4 \Norm{\f-\divdivbf \Dtwo \uh}{\T}^2
   + \sum_{\F\in\FcalT} \Big(
   \hF^{-3} \Norm{\jump{\uh}}{\F}^2
    + \hF^{-1} \Norm{\jump{\nabla\uh}}{\F}^2\\
&   + \hF \Norm{\jump{\Dtwoh\uh}\nuboldF}{\F}^2
    + \hF \Norm{\jump{\Dtwoh\uh}\tauboldF}{\F}^2
    + \hF^{3} \Norm{\jump{\divbf{_\mathrm\h} \Dtwoh \uh}\cdot \nuboldF}{\F}^2
        \Big).
\end{split}
\end{equation}
For future convenience,
we also define the corresponding global error estimator
\begin{equation} \label{global-DG-error-estimator}
    \gimel^2 := \sum_{\T\in\tauh} \gimelT^2.
\end{equation}
\end{subequations}
We want to prove that $\etaT \lesssim \gimelT$;
to this aim, we estimate each $\eta_{j,\T}$, $j=1,\dots,5$,
in~\eqref{local-error-estimator} in terms of combinations
of $\gimel_{\ell,\T}$, $\ell=1,\dots,6$,
in~\eqref{estimator:DMS}.
In all cases, we split the generalized Hessian~$\Hbbhuh$
into the standard Hessian $\Dtwoh\uh$
and the lifting $\Lbbh(\uh)$.
As for the latter, using inverse estimates and the stability
result~\eqref{lifting} bounds all the contributions
involving $\Lbbh(\uh)$ in terms
of~$\gimel_{2,\T}$ and~$\gimel_{3,\T}$.
Therefore, we only focus on the remainder parts 
involving the standard broken Hessian.

The resulting term stemming from~$\eta_{1,\T}$
boils down to $\gimel_{1,\T}$.
On the other hand,
as for the term resulting from~$\eta_{2,\T}$, we have
\[
\sum_{\F\in\FcalT} \hF \Norm{\nuboldF^T \jump{\Dtwoh\uh}\nuboldF}{\F}^2
\le \gimel_{4,\T}^2.
\]
As for the term resulting from~$\eta_{3,\T}$, we write
\[
\begin{split}
& \sum_{\F\in\FcalT} 
    \hF^3 \Norm{\partialtbfF(\tauboldF^T \jump{\Dtwoh\uh}\nuboldF)
    + \nuboldF^T \jump{\divbf\Dtwoh\uh}}{\F}^2 \\
& \lesssim \sum_{\F\in\FcalT}  \hF^3
    \big(\Norm{\partialtbfF(\tauboldF^T \jump{\Dtwoh\uh}\nuboldF)}{\F}^2
    + \Norm{\nuboldF^T \jump{\divbf\Dtwoh\uh}}{\F}^2 \big) \\
& \lesssim \sum_{\F\in\FcalT}  \hF
    \Norm{\jump{\Dtwoh\uh}\nuboldF}{\F}^2
    + \sum_{\F\in\FcalT} \hF^3 \Norm{\nuboldF^T \jump{\divbf\Dtwoh\uh}}{\F}^2
    = \gimel_{4,\T}^2 + \gimel_{6,\T}^2.
\end{split}
\]
The term resulting from~$\eta_{4,\T}$ vanishes
as $\curlbf \Dtwoh\uh$ is piecewise zero.
As for the term resulting from~$\eta_{5,\T}$, we write
\[
\sum_{\F\in\FcalT} \hF \Norm{\tauboldF^T  \jump{\Dtwoh\uh}}{\F}^2
= \gimel_{5,\T}^2.
\]
The considerations above imply that the local error estimator~\eqref{local-error-estimator}
can be estimated from above by
the standard DG residual error estimator.
In Section~\ref{section:nr} below,
we shall assess the practical performance
of the two error estimators.

\section{Numerical results} \label{section:nr}

\paragraph*{Test cases.}
The first test case,
cf. with \cite[pp. 107--108]{Grisvard:1992},
is set on the L-shaped domain
$\Omega_1:= (-1,1)^2 \setminus ( [0,1) \times (-1,0] )$
with exact solution
\begin{equation} \label{u1}
u_1(r,\theta) = r^{1+z} g(\theta),
\end{equation}
where $(r,\theta)$ are the polar coordinates at the
re-entrant corner $(0,0)$,
$z=0.544483736782464$,
and, for $\omega=3\pi/2$,
\[
\begin{split}
g(\theta)
& := \Big[ \frac{1}{1-z} \sin( (z-1) \omega)
            - \frac{1}{z+1} \sin( (z+1)\omega) \Big]
            \cos( (z-1)\theta) - \cos( (z+1)\theta)\\
&   \qquad -\Big[ \frac{1}{1-z} \sin( (z-1) \theta)
            - \frac{1}{z+1} \sin( (z+1)\theta) \Big]
            \cos( (z-1)\omega) - \cos( (z+1)\omega) .
\end{split}
\]
The function~$u_1$ has inhomogeneous clamped boundary
conditions on part of~$\partial \Omega_1$
(it has effectively homogeneous boundary
conditions on the two edges at the re-entrant corner);
we refer to the next paragraph for further discussions
on this aspect.

The second test case is set on the unit square domain $\Omega_2=(0,1)^2$ with exact solution
\begin{equation} \label{u2}
    u_2(x,y) = \sin^2(\pi \ x) \sin^2(\pi \ y).
\end{equation}

\paragraph*{Handling inhomogeneous boundary conditions.}
The analysis discussed so far considered
the biharmonic problem with homogeneous 
clamped boundary conditions.
The inhomogeneous case follows closely the presentation
of the previous sections.
For the sake of completeness, we sketch
here some details on the inhomogeneous boundary conditions
\[
u=g_1,
\qquad\qquad\qquad\qquad\qquad
\partialnbfOmega u = g_2
\qquad\qquad\qquad \text{on } \partial\Omega.
\]
Formal computations reveal that
\begin{equation} \label{formal-computations}
\begin{split}
\taubfOmega^T \Dtwo u_{|\partial\Omega}
&   = \partialtbfOmega (\nabla u_{|\partial\Omega} )
    = \partialtbfOmega ( (\partialtbfOmega g_1)\tauboldOmega
                        + g_2 \nuboldOmega)
    = (\partialtwotbfOmegatwo g_1)\tauboldOmega
            + (\partialtbfOmega g_2) \nuboldOmega,\\
\nabla u_{|\partial\Omega}
&   = (\nuboldOmega^T \nabla u_{|\partial\Omega}) \nuboldF
        + (\tauboldOmega^T \nabla u_{|\partial\Omega}) \tauboldOmega
    =  g_2 \nuboldOmega + (\partialtbfOmega g_1)\tauboldOmega. \\
\end{split}
\end{equation}
For the definition of the lifting
operators~\eqref{lifting},
the jump operators on boundary facets
modify into
\[
\jump{\uh}_{\F} = \uh{}_{|\F} - g_1,
\qquad\qquad\qquad\qquad
\jump{\nabla \uh} _\F
\overset{\eqref{formal-computations}}{=}
\nabla \uh{}_{|\F} - [g_2 \nuboldOmega + (\partialtbfOmega g_1)\tauboldOmega].
\]
Identity~\eqref{properties-GH:2} holds true only
up to some terms involving the boundary data
that eventually modifies
the local error estimator in~\eqref{local-error-estimator},
and notably the term $\eta_{5,\T}$, which
on boundary facets~$\F$ in $\EcalT$ now reads
\begin{equation} \label{jump:sym-GH}
\tauboldOmega^T \jump{\sym\Hbbhuh}_{\F}
= (\taubfOmega^T \sym\Hbbhuh 
    - (\partialtwotbfOmegatwo g_1)\tauboldOmega
    - (\partialtbfOmega g_2) \nuboldOmega)_{|\F} .
\end{equation}
The reason for this resides in the fact that
identity~\eqref{T2-as-sup}
is not true any more.
In particular, we have
\[
\begin{split}
& (\Dtwo u, \symcurlbb \thetabf)_{\Omega}
    = (\Dtwo u, \curlbb \thetabf)_{\Omega}
    = (\nabla u, \curlbb\thetabf \cdot
                \nuboldOmega)_{\partial\Omega}
    = (\nabla u, \nabla\thetabf \cdot
                \tauboldOmega)_{\partial\Omega} \\
&   \overset{\eqref{formal-computations}}{=}
    ((\partialtbfOmega g_1)\tauboldOmega
    + g_2 \nuboldOmega ,
        \partialtbfOmega \thetabf)_{\partial\Omega}
    = - ((\partialtwotbfOmegatwo g_1) \tauboldOmega, \thetabf)_{\partial\Omega}
        - ( (\partialtbfOmega g_2) \nuboldOmega,
        \thetabf)_{\partial\Omega},
\end{split}
\]
which is different from zero in general
(while it was zero for homogeneous
boundary conditions).
In fact, the discrete counterpart
of the above identities holds,
whence~\eqref{properties-GH:2}
is not valid in general,
but should involve the boundary data
on the right-hand side
with an arbitrary test function $\thetabfh$.
This reasoning eventually combines with the estimates
in Section~\ref{subsection:upper-lower-bound-T2}
and yields~\eqref{jump:sym-GH}.
    
\paragraph*{Outline of the remainder of the section.}
In Section~\ref{subsection:nr-2error-measures},
we compare the performance of the error measure
on the left-hand side of~\eqref{rewriting-in-terms-of-max}
with the standard DG error under uniform refinements
and for different orders.
We investigate the practical efficiency of the error
estimators in~\eqref{global-error-estimator}
and the standard DG-residual error estimator
under uniform $\h$- and $\p$-refinements
in Section~\ref{subsection:nr-effectivity-h}.
Section~\ref{subsection:nr-adaptive} is devoted
to introduce an adaptive algorithm driven
by the error estimator in~\eqref{global-error-estimator}
and investigate its performance.
In~\eqref{IPDG-bilinear-form}, we shall always consider
\[
\csigma = 3 \p^6,
\qquad\qquad\qquad
\ctau   = 9 \p^2,
\]
which mimics the $\p$-scaling in~\cite{Dong-Mascotto-Sutton:2021}.
Moreover, in the local estimators
\eqref{error-estimator} and~\eqref{DG-error-estimator},
we replace $\h^{\lambda}$,
$\lambda$ in $\Z$,
with $\h^{\lambda}/\p^{\lambda}$;
this poses no modifications
in the analysis and gives a
scaling in terms of the degree of the scheme
that recalls that in~\cite{ChaumontFrelet:2025}.
Throughout, we consider triangular meshes.
The IPDG solution in~\eqref{method}
is computed using a direct solver.

\subsection{Comparison of two error measures}
\label{subsection:nr-2error-measures}
Given $u$ and $\uh$ the solutions to~\eqref{problem:weak}
and~\eqref{method},
$\Hbbhuh$ as in~\eqref{generalized:Hessian},
and $\Norm{\cdot}{\DG}$ as in~\eqref{DG-norm},
we are interested here in comparing the following two
error measures:
\begin{equation} \label{error-measures}
\Norm{\Dtwo u - \Hbbhuh}{\Omega},
\qquad\qquad\qquad\qquad
\Norm{u-\uh}{\DG}.
\end{equation}
We pick the exact solution~$u_2$ in~\eqref{u2},
$\p=2$ and $\p=5$,
sequences of uniformly refined meshes,
and report the results in Tables~\ref{tab:p-2}
and~\ref{tab:p-5}.

\begin{table}[htp]
  \begin{center}
  \begin{tabular}{|r|lr|lr|}
    \hline
    level & $\Norm{u-\uh}{\DG}$ & order & $\Norm{\Dtwo u - \Hbbhuh}{\Omega}$ & order \\ \hline
    $1$ & $1.5095\times 10^{1}$ & -- & $9.4596\times 10^{0}$ & -- \\
    $2$ & $1.4193\times 10^{1}$ & $0.09$ & $1.1561\times 10^{1}$ & $-0.29$ \\
    $3$ & $1.1255\times 10^{1}$ & $0.33$ & $1.0065\times 10^{1}$ & $0.20$ \\
    $4$ & $4.7746\times 10^{0}$ & $1.24$ & $5.3481\times 10^{0}$ & $0.91$ \\
    $5$ & $2.4031\times 10^{0}$ & $0.99$ & $2.7127\times 10^{0}$ & $0.98$ \\
    $6$ & $1.2006\times 10^{0}$ & $1.00$ & $1.3597\times 10^{0}$ & $1.00$ \\
    $7$ & $5.9949\times 10^{-1}$ & $1.00$ & $6.8016\times 10^{-1}$ & $1.00$ \\
    $8$ & $2.9948\times 10^{-1}$ & $1.00$ & $3.4011\times 10^{-1}$ & $1.00$ \\
    $9$ & $1.4966\times 10^{-1}$ & $1.00$ & $1.7006\times 10^{-1}$ & $1.00$ \\
    $10$ & $7.4812\times 10^{-2}$ & $1.00$ & $8.5029\times 10^{-2}$ & $1.00$ \\
    $11$ & $3.7401\times 10^{-2}$ & $1.00$ & $4.2514\times 10^{-2}$ & $1.00$ \\
    \hline
  \end{tabular}
  \end{center}
  \caption{Error measures in~\eqref{error-measures}.
  Exact solution~$u_2$ in~\eqref{u2};
    $\p=2$; sequences of uniformly refined meshes.}
  \label{tab:p-2}
\end{table}
\begin{table}[htp]
  \begin{center}
  \begin{tabular}{|r|lr|lr|}
    \hline
    level & $\Norm{u-\uh}{\DG}$ & order & $\Norm{\Dtwo u - \Hbbhuh}{\Omega}$ & order \\ \hline
    $1$ & $9.1635\times 10^{0}$ & -- & $8.7192\times 10^{0}$ & -- \\
    $2$ & $1.9076\times 10^{0}$ & $2.26$ & $1.7656\times 10^{0}$ & $2.30$ \\
    $3$ & $2.8588\times 10^{-2}$ & $6.06$ & $2.5638\times 10^{-2}$ & $6.11$ \\
    $4$ & $7.0726\times 10^{-3}$ & $2.02$ & $6.6088\times 10^{-3}$ & $1.96$ \\
    $5$ & $4.4821\times 10^{-4}$ & $3.98$ & $4.1799\times 10^{-4}$ & $3.98$ \\
    $6$ & $2.8049\times 10^{-5}$ & $4.00$ & $2.6155\times 10^{-5}$ & $4.00$ \\
    $7$ & $1.7624\times 10^{-6}$ & $3.99$ & $1.6442\times 10^{-6}$ & $3.99$ \\
    \hline
  \end{tabular}
  \end{center}
  \caption{Error measures in~\eqref{error-measures}.
  Exact solution~$u_2$ in~\eqref{u2};
    $\p=5$; sequences of uniformly refined meshes.}
  \label{tab:p-5}
\end{table}

The two error measures in~\eqref{error-measures}
converge with the expected order
and deliver approximately the same accuracy.

\subsection{Efficiency under uniform refinements}
\label{subsection:nr-effectivity-h}
We assess
the performance of the effectivity indices
\begin{equation} \label{effectivity-indices}
\FfrakHbbh := \frac{\eta}{\Norm{\Dtwo u - \Hbbhuh}{\Omega}},
\qquad\qquad\qquad
\FfrakDG := \frac{\gimel}{\Norm{u - \uh}{\DG}},
\end{equation}
where $\eta$ and~$\gimel$ are given
in~\eqref{global-error-estimator} 
and~\eqref{global-DG-error-estimator}.
First, we consider uniform mesh refinements,
pick the exact solution $u_1$ in~\eqref{u1},
$\p=2,3,4,5$, and~$\p=6,7,8,9$
and report the results
in Figures~\ref{figure:effectivity-h-version}
and~\ref{figure:effectivity-h-version-5-9}.

\begin{figure}[htb]
\centering
\includegraphics[width=0.45\textwidth]{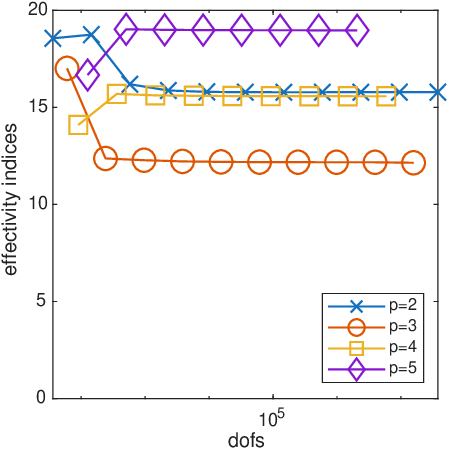}
\qquad
\includegraphics[width=0.45\textwidth]{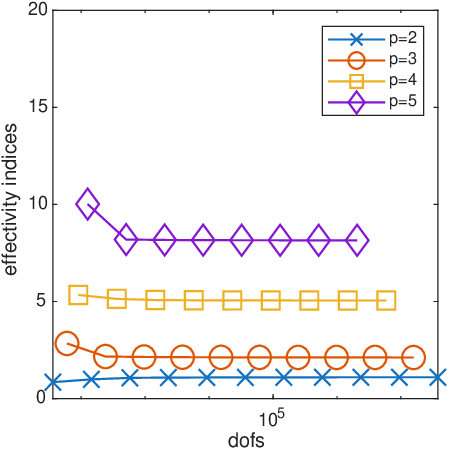}
\caption{Effectivity indices in~\eqref{effectivity-indices}.
Test case with exact solution~$u_1$ in~\eqref{u1};
$\p=2,3,4,5$; uniform mesh refinements.
\emph{Left-panel}: effectivity index~$\FfrakHbbh$.
\emph{Right-panel}: effectivity index~$\FfrakDG$.}
\label{figure:effectivity-h-version}
\end{figure}

\begin{figure}[htb]
\centering
\includegraphics[width=0.45\textwidth]{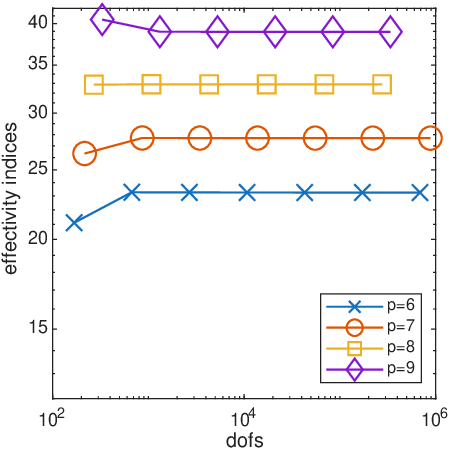}
\qquad
\includegraphics[width=0.45\textwidth]{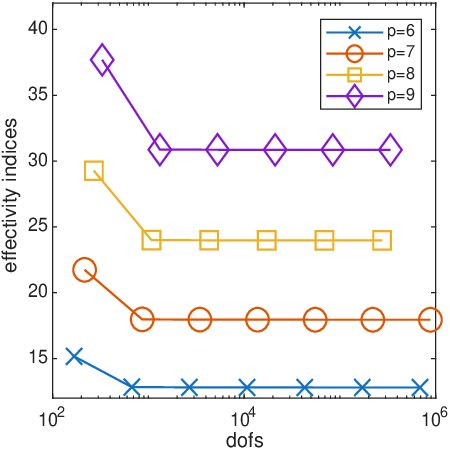}
\caption{Effectivity indices in~\eqref{effectivity-indices}.
Test case with exact solution~$u_1$ in~\eqref{u1};
$\p=6,7,8,9$; uniform mesh refinements.
\emph{Left-panel}: effectivity index~$\FfrakHbbh$.
\emph{Right-panel}: effectivity index~$\FfrakDG$.}
\label{figure:effectivity-h-version-5-9}
\end{figure}

Next, for the test case with exact
solution~$u_1$ in~\eqref{u1}
and a fixed structured mesh of~$6$ triangles,
we investigate the performance
of the effectivity indices in~\eqref{effectivity-indices}
under $\p$-refinements in Figure~\ref{figure:effectivity-p-version}
for $\p=2,\dots,20$.

\begin{figure}[htb]
\centering
\includegraphics[width=0.45\textwidth]{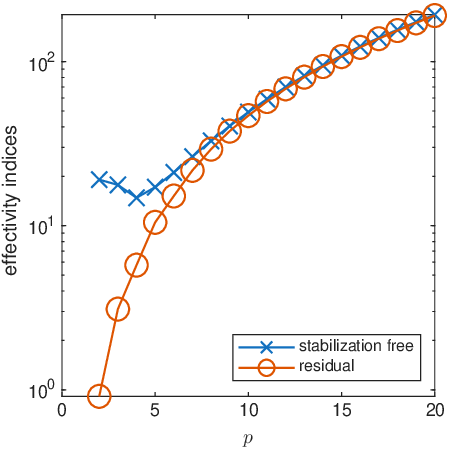}
\caption{Effectivity indices in~\eqref{effectivity-indices}.
Test case with exact solution~$u_1$ in~\eqref{u1};
fixed structured mesh of~$6$ triangles;
$\p$-refinement.
Effectivity indices~$\FfrakHbbh$ and~$\FfrakDG$
in~\eqref{effectivity-indices}.}
\label{figure:effectivity-p-version}
\end{figure}

From Figures~\ref{figure:effectivity-h-version}
and~\ref{figure:effectivity-h-version-5-9},
we assess the efficiency of the error estimator
in~\eqref{error-estimator};
for low degree~$\p$, the standard residual error estimator
exhibits lower indices,
while for larger values of~$\p$
the indices tend to be more similar.
From Figure~\ref{figure:effectivity-p-version},
we observe that the two error estimators
are not $\p$-robust
(as one could have expected from the
use of inverse estimates in the derivation
of the lower bound~\eqref{apos-bound-2});
importantly, for increasing~$\p$,
the effectivity indices converge to one another.

\subsection{Numerical results for an adaptive algorithm}
\label{subsection:nr-adaptive}
We use here the error estimators in~\eqref{error-estimator}
and~\eqref{DG-error-estimator}
to drive an adaptive algorithm of the form
\[
\textbf{SOLVE}
\qquad\longrightarrow\qquad
\textbf{ESTIMATE}
\qquad\longrightarrow\qquad
\textbf{MARK}
\qquad\longrightarrow\qquad
\textbf{REFINE}
\]
where
\begin{itemize}
\item in the \textbf{SOLVE} step, we employ a direct solver;
\item in the \textbf{ESTIMATE} step,
we employ the error estimators in~\eqref{error-estimator}
and~\eqref{DG-error-estimator}
\item in the \textbf{MARK} step, we use D\"orfler's marking
strategy with marking parameter $1/2$;
\item in the \textbf{REFINE} step,
we use newest vertex bisection
(with the closure algorithm).
\end{itemize}
We run the adaptive algorithm on the test case
with exact solution~$u_1$ in~\eqref{u1}
for fixed $\p=2$ and~$5$.
In Figure~\ref{figure:adaptive},
we report the error measures in~\eqref{error-measures},
and the error estimators in~\eqref{error-estimator}
and~\eqref{DG-error-estimator}
under uniform and adaptive mesh refinements.

\begin{figure}[htb]
\centering
\includegraphics[width=0.45\textwidth]{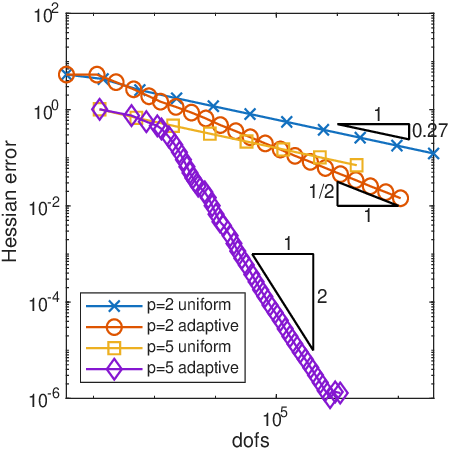}
\qquad
\includegraphics[width=0.45\textwidth]{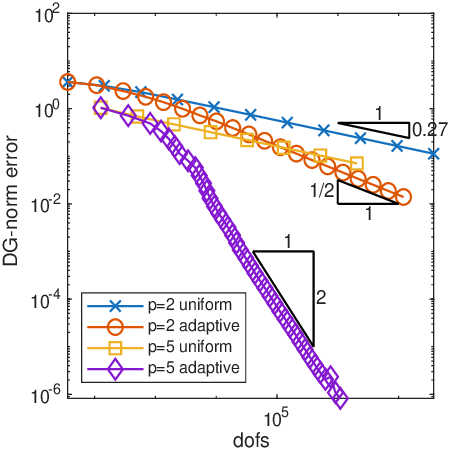}
\caption{Test case with exact solution~$u_1$ in~\eqref{u1};
$\p=2,5$; uniform and adaptive mesh refinements.
\emph{Left-panel}: $\Norm{\Dtwo u - \Hbbhuh}{\Omega}$;
the adaptive algorithm is driven by the error estimator
in~\eqref{error-estimator}.
\emph{Right-panel}: $\Norm{u - \uh}{\DG}$;
the adaptive algorithm is driven by the error estimator
in~\eqref{DG-error-estimator}.}
\label{figure:adaptive}
\end{figure}

Optimal convergence using the adaptive algorithm
with both estimators is recovered;
the expected suboptimal rates are observed
for under uniform mesh refinements.

\section{Conclusions} \label{section:conclusions}
\paragraph*{Contents of this work.}
We designed an error estimator for an IPDG discretization
of the biharmonic problem,
which does not depend explicitly on the stabilization of the
method. Upper and lower bounds with respect to a suitable
error measure are proven.
Crucial tools in the analysis were
a rewriting of the error measure based on the
two dimensional div-div complex,
quasi-interpolants in $\mathcal C^1$
finite element spaces,
and a Korn-type inequality for the $\symcurlbb$ operator.
The reliability and efficiency
of the error estimator were
verified in practice as well.
An adaptive algorithm was finally used
to construct sequences of geometrically refined
meshes to approximate solutions with corner singularities.

\paragraph{Outlook.}
Open problems that may be tackled in the future
include the convergence analysis of the adaptive algorithm,
which may be simpler than that
for the standard IPDG error estimator
due to the absence of explicit stabilizations in
the proposed error estimator; cf. also Remark~\ref{remark:powers-h}.
The estimates presented in this work can be also made explicit
in terms of the degree of accuracy;
cf. Remark~\ref{remark:p-dependence-bounds}.
Besides, differently from the standard a posteriori error estimates
as in~\cite{Georgoulis-Houston-Virtanen:2011, Brenner-Gudi-Sung:2010},
where the 2D and 3D cases have a similar treatment,
we have here crucial differences that deserve a separate contribution:
the Green's identity~\eqref{Green-divdiv}
gets more complicated, in particular the ``vertex terms''
are substituted by ``edge'' terms that have to be carefully
handled in the a posteriori error analysis;
considering nontrivial topologies
gest more complicated compared to the 2D case
essentially because in 3D the $\curlbf$
operator is not the rotated $\nabla$.

\paragraph*{Acknowledgments.}
The authors are grateful to Johnny Guzm\'an
for discussions concerning the topics
in Appendices~\ref{appendix:nontrivial-topologies}
and~\ref{appendix:tools-nonconforming}.
LM has been partially funded by MUR (PRIN2022 research grant n. 202292JW3F).
LM was also partially supported by the European Union (ERC Synergy, NEMESIS, project number 101115663).
Views and opinions expressed are however those of the author(s) only and do not necessarily reflect
those of the European Union or the European Research Council Executive Agency. 
LM is member of the Gruppo Nazionale Calcolo Scientifico-Istituto Nazionale di Alta Matematica (GNCS-INdAM).

{\footnotesize
\bibliography{bibliogr}

\begin{thebibliography}{10}

\bibitem{Becker-Hansbo-Larson:2003}
R.~Becker, P.~Hansbo, and M.~G. Larson.
\newblock Energy norm a posteriori error estimation for discontinuous
  {G}alerkin methods.
\newblock {\em Comput. Methods Appl. Mech. Engrg.}, 192(5-6):723--733, 2003.

\bibitem{Bernardi-Girault-Hecht-Raviart-Riviere:2025}
Ch. Bernardi, V.~Girault, F.~Hecht, P.-A. Raviart, and B.~Riviere.
\newblock {\em Mathematics and finite element discretizations of incompressible
  {N}avier-{S}tokes flows}, volume~90 of {\em Classics in Applied Mathematics}.
\newblock Society for Industrial and Applied Mathematics (SIAM), Philadelphia,
  PA, 2025.

\bibitem{Bonito-Nochetto:2010}
A.~Bonito and R.~H. Nochetto.
\newblock Quasi-optimal convergence rate of an adaptive discontinuous
  {G}alerkin method.
\newblock {\em SIAM J. Numer. Anal.}, 48(2):734--771, 2010.

\bibitem{Brenner-Gudi-Sung:2010}
S.~C. Brenner, T.~Gudi, and L.-y. Sung.
\newblock An a posteriori error estimator for a quadratic {$C^0$}-interior
  penalty method for the biharmonic problem.
\newblock {\em IMA J. Numer. Anal.}, 30(3):777--798, 2010.

\bibitem{Brenner-Scott:2008}
S.~C. Brenner and L.~R. Scott.
\newblock {\em The {M}athematical {T}heory of {F}inite {E}lement {M}ethods},
  volume~15 of {\em Texts in Applied Mathematics}.
\newblock Springer, New York, third edition, 2008.

\bibitem{Brenner-Wang-Zhao:2004}
S.~C. Brenner, K.~Wang, and J.~Zhao.
\newblock Poincar\'e-{F}riedrichs inequalities for piecewise {$H^2$} functions.
\newblock {\em Numer. Funct. Anal. Optim.}, 25(5-6):463--478, 2004.

\bibitem{Carstensen-Gallistl-Gedicke:2019}
C.~Carstensen, D.~Gallistl, and J.~Gedicke.
\newblock Residual-based a posteriori error analysis for symmetric mixed
  {A}rnold-{W}inther {FEM}.
\newblock {\em Numer. Math.}, 142(2):205--234, 2019.

\bibitem{Carstensen-Hu:2021}
C.~Carstensen and J.~Hu.
\newblock Hierarchical {A}rgyris finite element method for adaptive and
  multigrid algorithms.
\newblock {\em Comput. Methods Appl. Math.}, 21(3):529--556, 2021.

\bibitem{ChaumontFrelet:2025}
Th. Chaumont-Frelet.
\newblock A new family of a posteriori error estimates for non-conforming
  finite element methods leading to stabilization-free error bounds.
\newblock {\em IMA J. Numer. Anal.}, accepted, 2025.

\bibitem{Chen-Huang:2021}
L.~Chen and X.~Huang.
\newblock Finite elements for divdiv-conforming symmetric tensors.
\newblock \url{http://arxiv.org/abs/2005.01271}, 2021.

\bibitem{Ciarlet:1975}
Ph. Cl\'ement.
\newblock Approximation by finite element functions using local regularization.
\newblock {\em RAIRO Anal. Numer.}, 9:77--84, 1975.

\bibitem{Costabel-McIntosh:2010}
M.~Costabel and A.~McIntosh.
\newblock On {B}ogovskiĭ and regularized {P}oincar\'e integral operators for
  de {R}ham complex on {L}ipschitz domains.
\newblock {\em Math. Z.}, 265:297--320, 2010.

\bibitem{Dari-Duran-Padra-Vampa:1996}
E.~Dari, R.~Dur\'an, C.~Padra, and V.~Vampa.
\newblock A posteriori error estimators for nonconforming finite element
  methods.
\newblock {\em RAIRO Anal. Numer.}, 30(4):385--400, 1996.

\bibitem{Dominicus-Gaspoz-Kreuzer:2024}
A.~Dominicus, F.~D. Gaspoz, and Ch. Kreuzer.
\newblock Convergence of an adaptive {$\mathcal C^0$}-interior penalty
  {G}alerkin method for the biharmonic problem.
\newblock {\em IMA J. Numer. Anal.}, 2024.

\bibitem{Dong-Mascotto-Sutton:2021}
Z.~Dong, L.~Mascotto, and O.~J. Sutton.
\newblock Residual-based a posteriori error estimates for {$hp$}-discontinuous
  {G}alerkin discretizations of the biharmonic problem.
\newblock {\em SIAM J. Numer. Anal.}, 59(3):1273--1298, 2021.

\bibitem{Dorsek-Melenk:2013}
Ph. D\"orsek and J.~M. Melenk.
\newblock Symmetry-free, {$p$}-robust equilibrated error indication for the
  {$hp$}-version of the {FEM} in nearly incompressible linear elasticity.
\newblock {\em Comput. Methods Appl. Math.}, 13(3):291--304, 2013.

\bibitem{Douglas-Dupont-Percell-Scott:1979}
J.~Douglas, T.~Dupont, P.~Percell, and R.~Scott.
\newblock A family of {$\mathcal C^1$} finite elements with optimal
  approximation properties for various {G}alerkin methods for 2nd and 4th order
  problems.
\newblock {\em RAIRO Anal. Numer.}, 13(3):227--255, 1979.

\bibitem{Ern-Guermond:2021}
A.~Ern and J.-L. Guermond.
\newblock {\em Finite {E}lements {I}---{A}pproximation and {I}nterpolation},
  volume~72 of {\em Texts in Applied Mathematics}.
\newblock Springer, Cham, 2021.

\bibitem{Fuhrer-Haberl-Heuer:2021}
Th. F{\"u}hrer, A.~Haberl, and N.~Heuer.
\newblock Trace operators of the bi-laplacian and applications.
\newblock {\em IMA J. Numer. Anal.}, 41(2):1031--1055, 2021.

\bibitem{Fuhrer-Heuer-Niemi:2019}
Th. F{\"u}hrer, N.~Heuer, and A.~Niemi.
\newblock An ultraweak formulation of the {K}irchhoff--{L}ove plate bending
  model and {DPG} approximation.
\newblock {\em Math. Comp.}, 88(318):1587--1619, 2019.

\bibitem{Georgoulis-Houston:2009}
E.~H. Georgoulis and P.~Houston.
\newblock Discontinuous {G}alerkin methods for the biharmonic problem.
\newblock {\em IMA J. Numer. Anal.}, 29(3):573--594, 2009.

\bibitem{Georgoulis-Houston-Virtanen:2011}
E.~H. Georgoulis, P.~Houston, and J.~Virtanen.
\newblock An {\it a~posteriori} error indicator for discontinuous {G}alerkin
  approximations of fourth-order elliptic problems.
\newblock {\em IMA J. Numer. Anal.}, 31(1):281--298, 2011.

\bibitem{Girault-Scott:2002}
V.~Girault and L.~R. Scott.
\newblock Hermite interpolation of nonsmooth functions preserving boundary
  conditions.
\newblock {\em Math. Comp.}, 71(239):1043--1074, 2002.

\bibitem{Graessle:2022}
B.~Gr\"a\ss{}le.
\newblock Optimal multilevel adaptive {FEM} for the {A}rgyris element.
\newblock {\em Comput. Methods Appl. Mech. Engrg.}, 399:Paper No. 115352, 19,
  2022.

\bibitem{Grisvard:1992}
P.~Grisvard.
\newblock {\em Singularities in {B}oundary {V}alue {P}roblems}, volume~22 of
  {\em Recherches en Math\'ematiques Appliqu\'ees}.
\newblock Masson, Paris; Springer-Verlag, Berlin, 1992.

\bibitem{John-Neilan-Smears:2016}
L.~John, M.~Neilan, and I.~Smears.
\newblock Stable discontinuous {G}alerkin {FEM} without penalty parameters.
\newblock In {\em Numerical mathematics and advanced applications---{ENUMATH}
  2015}, volume 112 of {\em Lect. Notes Comput. Sci. Eng.}, pages 165--173.
  Springer Cham, 2016.

\bibitem{Karakashian-Pascal:2007}
O.~A. Karakashian and F.~Pascal.
\newblock Convergence of adaptive discontinuous {G}alerkin approximations of
  second-order elliptic problems.
\newblock {\em SIAM J. Numer. Anal.}, 45(2):641--665, 2007.

\bibitem{Kreuzer-Georgoulis:2018}
Ch. Kreuzer and E.~H. Georgoulis.
\newblock Convergence of adaptive discontinuous {G}alerkin methods.
\newblock {\em Math. Comp.}, 87(314):2611--2640, 2018.

\bibitem{Morgan-Scott:1975}
J.~Morgan and R.~Scott.
\newblock A nodal basis for {$\mathcal C^{1}$} piecewise polynomials of degree
  {$n\geq 5$}.
\newblock {\em Math. Comput.}, 29:736--740, 1975.

\bibitem{Necas:1967}
J.~Ne\v{c}as.
\newblock {\em Les {M}\'ethodes {D}irectes en {T}h\'eorie des \'{E}quations
  {E}lliptiques}.
\newblock Masson et Cie, \'Editeurs, Paris; Academia, \'Editeurs, Prague, 1967.

\bibitem{Riviere-Wheeler:2003}
B.~Rivi\`ere and M.~F. Wheeler.
\newblock A posteriori error estimates for a discontinuous {G}alerkin method
  applied to elliptic problems.
\newblock {\em Comput. Math. Appl.}, 46(1):141--163, 2003.

\bibitem{Verfurth:2013}
R.~Verf\"urth.
\newblock {\em A {P}osteriori {E}rror {E}stimation {T}echniques for {F}inite
  {E}lement {M}ethods}.
\newblock Numerical Mathematics and Scientific Computation. Oxford University
  Press, Oxford, 2013.

\end{thebibliography}
}
\bibliographystyle{plain}

\appendix

\section{Proof of Lemma~\ref{lemma:special-split}}
\label{appendix:nontrivial-topologies}

Recall the following result,
cf. \cite[Lemma 2.1.2]{Bernardi-Girault-Hecht-Raviart-Riviere:2025}.
\begin{lemma} \label{lemma:Girault-Raviart}
Given~$\Omega$ as in~\eqref{Omega},
the following statements are equivalent:
for all $\xiboldtilde$ in $\Hbf(\divz,\Omega)$,
\begin{enumerate}
\item there exists $\theta$ in $\H(\curlbf,\Omega)$
such that $\xiboldtilde=\curlbf\theta$;
\item $\int_{\Sigmaj} \xiboldtilde\cdot \nuboldOmega=0$ for all $j=1,\dots,N$
\footnote{In~\cite{Bernardi-Girault-Hecht-Raviart-Riviere:2025},
the index~$j$ starts from~$0$, which is equivalent to what
we ask here due to
the fact that $\xiboldtilde$ is divergence free
and the corresponding compatibility conditions hold true.}.
\end{enumerate}
\end{lemma} 

\begin{remark} \label{remark:construction-cohomology}
It is possible to construct
$\sigmaboldj$ in $\Hbf(\divz,\Omega)$
such that
\[
\int_{\Sigmaj} \sigmaboldi \cdot \nuboldOmega = \delta_{i,j}
\qquad\qquad \forall i,j = 1,\dots,N.
\]
For instance, we take $\sigmaboldj:=\nabla u_j$,
where each~$u_j$ in $\H^1(\Omega)\cap\L^2_0(\Omega)$ solves
\[
\begin{cases}
\Delta u_j = 0
    & \text{in } \Omega\\
\nabla u_j \cdot \nuboldOmega = \delta_{i,j}
    & \text{on } \Sigma_i, \; \forall i=1,\dots,N\\
\nabla u_j \cdot \nubold_0 = -\vert\Sigma_j\vert / \vert\Sigma_0\vert
    & \text{on } \Sigma_0
\end{cases}
\qquad\qquad
\forall j=1,\dots,N
\]
Standard elliptic regularity theory in 2D entails that
each~$u_j$ belongs to $\H^{\frac32+\varepsilon}(\Omega)$.
We introduce the lowest order Raviart-Thomas
interpolant~$\sigmabold_{\h,j}$ of~$\sigmaboldj$,
which preserves the normal traces of~$\sigmaboldj$
on each~$\Sigmaj$ and is divergence free.

Let~$\psij$ be any function
in $C^\infty(\overline\Omega)$
with $\psij{}_{|\Sigma_i}=\delta_{i,j}$
for all $i,j=1,\dots,N$;
for instance, we can mollify piecewise constant
functions taking values zero and one
near the corresponding~$\Sigma_i$ and~$\Sigmaj$, respectively.
With such functions fixed, we write
\begin{equation} \label{eq:normal-bdd--1}
\int_{\Sigmaj}
\xiboldtilde \cdot \nuboldOmega
= \int_{\Omega}
\xiboldtilde \cdot \nabla  \psij 
\le \Norm{\xiboldtilde}{\Hbfostar} \Norm{\nabla \psij}{1,\Omega}       
\lesssim \Norm{\xiboldtilde}{\Hbfostar}
\qquad
\forall  \xiboldtilde \in \Hbf(\divz,\Omega),
\end{equation}
where the hidden constant only depends on the choice
of the~$\psij$ and thus on~$\Omega$.
\eremk
\end{remark}

Based on this construction, we prove a technical result. Define
\[
\Hbfstardivz
:= \{ \vbf\in \Hbfostar \mid \div\vbf=0 \}.
\]

\begin{lemma} \label{lemma:special-split-auxiliary}
For all $\xibold$ in $\Hbf(\divz,\Omega)$,
there exist~$\sigmaboldh$
in $\RT_0(\taun)\cap \Hbf(\divz,\Omega)$
and $\theta$ in $\H^1(\Omega)$ such that
\begin{equation} \label{eq:special-split}
\xibold = \curlbf\theta + \sigmaboldh.
\end{equation}
Moreover, for all $\xibold$ in $\Hbfstardivz$,
there exist~$\sigmaboldh$ in $\RT_0(\taun)\cap \Hbf(\divz,\Omega)$
and~$\theta$ in~$\L^2(\Omega)$ such that
\begin{equation} \label{eq:special-split-bis}
\xibold = \curlbf\theta + \sigmaboldh.
\end{equation}
\end{lemma}
\begin{proof}
Consider the notation as in
Remark~\ref{remark:construction-cohomology}.\\
\textbf{Proof of~\eqref{eq:special-split}.}
Given~$\xibold$ in $\Hbf(\divz,\Omega)$,
we set
\begin{equation} \label{def:xiboldtilde}
\xiboldtilde
:= \xibold - \sigmaboldh
:= \xibold -
    \sum_{j=1}^N  \Big( \int_{\Sigmaj} \xibold\cdot\nuboldOmega \Big)
    \sigmabold_{\h,j},
    \qquad \text{where } \sigmaboldh \in \RT_0(\taun).
\end{equation}
By construction, $\int_{\Sigmaj} \xiboldtilde\cdot\nuboldOmega = 0$
for all $j=1,\dots,N$ and~$\xiboldtilde$
is in $\Hbf(\divz,\Omega)$.
Therefore, we are in a position to apply Lemma~\ref{lemma:Girault-Raviart}:
there exists~$\theta$ in~$\H^1(\Omega)$ such that
\begin{equation} \label{xibold-expansion}
\xiboldtilde = \curlbf \theta,
\quad\theta\in \H^1(\Omega)
\qquad\overset{\eqref{def:xiboldtilde}}{\Longrightarrow}\qquad
\xibold = \curlbf \theta + \sigmaboldh,
\end{equation}
which is~\eqref{eq:special-split}.
For later use, we observe that
\small\begin{equation} \label{sigma-part}
\Norm{\sigmaboldh}{\Hbfostar}
    + \Norm{\sigmaboldh}{}
    + \Norm{\curlbf\theta}{\Hbfostar}
\overset{\eqref{xibold-expansion}}{\lesssim}
    \Norm{\sigmaboldh}{\Hbfostar}
    + \Norm{\sigmaboldh}{}
    + \Norm{\xibold}{\Hbfostar}
\overset{\eqref{eq:normal-bdd--1}}{\lesssim}
    \Norm{\xibold}{\Hbfostar},
\end{equation} \normalsize
where the hidden constants depend on~$\Omega$.\\
\textbf{Density of~$\Hbf(\divz,\Omega)$
in~$\Hbfstardivz$.}
Using~\cite[Corollary~4.8 (b)]{Costabel-McIntosh:2010},
given $\xibold$ in $\Hbfstardivz$,
we have the decomposition
\[
\xibold=\curlbf\theta + \sigmabold,
\qquad\qquad\qquad
\theta \in \L^2(\Omega),
\;
\sigmabold \in \mathbf C^\infty(\Omega).
\]
As~$\xibold$ is divergence free, also $\sigmabold$ is divergence free.
We consider the extension by zero
of~$\theta$ to the whole~$\Rbb^2$,
which we still call $\theta$.
We mollify~$\theta$ and get a smooth~$\rhodelta\star\theta$.
Since the mollification commutes with differential operators in $\Rbb^d$, we have
\[
\xibolddelta
:=\curlbf(\rhodelta\star\theta)+\sigmabold
=\rhodelta\star(\curlbf\theta)+\sigmabold
    \in \Hbf(\divz,\Omega).
\]
We have that~$\thetan-\theta$ converges to~$0$ in $\L^2(\Omega)$
by standard approximation properties of mollifiers.
Since $\curlbf$ is continuous from $\L^2(\Omega)$
to $\Hbfstardivz$, we deduce
that $\curlbf(\thetan-\theta)$ converges to~$\zerobf$
in $\Hbfstardivz$.
Since $\div\xibold = 0 = \div \sigmabold$, we get
\[
\xibolddelta \to \xibold
\text{ in } \Hbfstardivz.
\]

\noindent \textbf{Proof of~\eqref{eq:special-split-bis}.}
Take a sequence $\{\xiboldn\}$ in~$\Hbf(\divz,\Omega)$
converging to~$\xibold$ in $\Hbfstardivz$.
Using~\eqref{eq:special-split},
we have the sequence of regular decompositions
\[
\xiboldn =
\curlbf \thetan + \sigmaboldhn,
\qquad\qquad\qquad
\thetan \in \H^1(\Omega),\;
\sigmaboldhn \in \RT_0(\taun) \cap \Hbf(\divz,\Omega).
\]
Using bound~\eqref{sigma-part} on each~$\xibold_n$,
we deduce that the sequence of the~$\sigmabold_{\h,n}$
is uniformly bounded in~$\Lbf^2(\Omega)$;
thus, it converges to a vector field~$\sigmaboldh$
in $\RT_0(\taun) \cap \Hbf(\divz,\Omega)$ up to subsequences.
We deduce that~$\curlbf \thetan$ also admits a subsequence
converging to~$\Bbf$ in $\Hbfstardivz$.
On the other hand, in 2D the $\curlbf$ operator is the rotated~$\nabla$ operator.
Moreover, the image of~$\L^2(\Omega)$ through~$\nabla$ is closed
in $\Hbfostar$, as a consequence of Peetre-Tartar's lemma
\cite[Theorem 1.2.1]{Bernardi-Girault-Hecht-Raviart-Riviere:2025}
and the Ne\v cas-Lions inequality \cite[Lemma 3.7.1]{Necas:1967}.
Hence, there exists~$\theta$ in $\L^2(\Omega)$
such that~$\curlbf\theta=\Bbf$.

All in all, we proved that~$\xiboldn$ converges
to $\curlbf \theta + \sigmaboldh$ in $\Hbfstardivz$
for $\theta$ in $\L^2(\Omega)$
and $\sigmaboldh$ in $\RT_0(\taun) \cap \Hbf(\divz,\Omega)$,
which is the assertion.
\end{proof}
We are now in a position
to prove Lemma~\ref{lemma:special-split}.
\begin{proof}
Define $\Bbf := \divbf \Abb$, which belongs to~$\Hbfostar$.
Since $\div \Bbf = 0$, \eqref{eq:special-split-bis} entails
\[
\Bbf = \curlbf \Phi + \sigmaboldh,
\qquad\qquad \text{for }
\Phi \in \L^2(\Omega),
\; \sigmaboldh\in\RT_0(\taun) \cap \Hbf(\divz,\Omega).
\]
Using that~$\sigmaboldh$ is a piecewise constant field over~$\taun$,
there exists~$\Sbbh$ in $\RTbb_1(\taun) \cap \Hbb(\div,\Omega)$
with $\divbf \Sbbh = \sigmaboldh$. Direct calculations reveal
\[
\curlbf\Phi = \divbf\Obb
\qquad\qquad\text{for}\qquad\qquad
\Obb:= \left (
\begin{array}{cc}
0     & \Phi \\
-\Phi & 0
\end{array}
\right ).
\]
We deduce
\[
\Bbf
= \divbf (\Obb+\Sbbh)
\qquad\Longrightarrow\qquad
\divbf (\Abb-\Obb-\Sbbh) = \zerobf.
\]
The last identity and the tensor version of~\eqref{eq:special-split-bis} imply
\[
\Abb - \Obb - \Sbbh = \curlbb\thetabf+\Mbbh,
\qquad\qquad \text{for }
\thetabf\in\Lbf^2(\Omega),
\; \Mbbh\in \RTbb_0(\tauh) \cap \Hbb(\divbf,\Omega).
\]
Further using the symmetry of $\Abb$
and the skew-symmetry of~$\Obb$, we deduce
\[
\Abb = \symcurlbb\thetabf + \sym(\Mbbh+\Sbbh)
=: \symcurlbb\thetabf + \Qbbh,
\qquad
\Qbbh \in \RTbb_1(\taun)\cap \Hbb_{\Sbb}(\divdivbfz,\Omega).
\]
\end{proof}

\section{Proof of Proposition~\ref{proposition:tools-nc}}
\label{appendix:tools-nonconforming}

We begin with a preliminary result
stating a Korn-type inequality for the $\symcurlbb$
operator on simply connected domains.
Given a Lipschitz domain~$D$,
introduce~$\RMbf(D)$ as the space of 2D rigid body motions
and~$\RMbfperp(D)$ as the corresponding space
of rotated (by $\pi/2$) rigid body motions.

\begin{lemma} \label{lemma:preliminary-appendix-a}
Given a two dimensional simply connected,
Lipschitz domain~$D$, $\Hbf^1(D)$ coincides with~$\Hbf(\symcurlbb,D)$.
Moreover, there exists a positive constant~$\CNL$
only depending on~$D$ such that,
for all $\psibf$ in $\Hbf(\symcurlbb,\Omega)$,
we have
\begin{equation}\label{H1-bounded-symcurl}
\min_{\psibfRMbfperp\in\RMbfperp(D)}
    \SemiNorm{\psibf-\psibfRMbfperp}{1,D}
\le \CNL(D) \Norm{\symcurlbb \psibf}{D}.
\end{equation}
The constant~$\CNL(D)$ is in fact that appearing in Korn's inequality on~$D$.
\end{lemma}
\begin{proof}
Let~$\psibf:=(-\psi_2,\psi_1)$, $\psibftilde:=(\psi_1,\psi_2)$,
and~$\nablaboldS$ denote the symmetric gradient tensor.
Direct computations reveal
\[
\SemiNorm{\psibf}{1,D}
    = \SemiNorm{\psibftilde}{1,D},
\qquad\qquad\qquad
\Norm{\symcurlbb \psibf}{D}
    = \Norm{\nablaboldS \psibftilde}{D}.
\]
Korn's inequality entails
\[
\begin{split}
\min_{\psibfRMbfperp \in \RMbfperp(D)}
    \SemiNorm{\psibf-\psibfRMbfperp}{1,D}
&   = \min_{\psibfRMbf \in \RMbf(D)}
    \SemiNorm{\psibftilde-\psibfRMbf}{1,D}\\
&   \le C(D) \Norm{\sym \nablaboldS \psibftilde}{D}
    = C(D) \Norm{\symcurlbb \psibf}{D}.
\end{split}
\]
\end{proof}

We are now in a position to prove
Proposition~\ref{proposition:tools-nc}.
\begin{proof}
Consider the partition of unity $\{\phinu\}$
given by the linear Lagrangian functions
associated with the vertices in~$\Nucalh$ of~$\taun$;
the corresponding patches are denoted by $\omeganu$.
We write
\[
\psibf
=\sum_{\nu\in\Nucalh} \psibf \phinu.
\]
Define~$\PibfRMbfperpomeganu$ as the $\Hbf^1(\omeganu)$ projector
(preserving the average over~$\omeganu$)
onto the space of rotated rigid body motions~$\RMbfperp(\omeganu)$.
Given
\[
\psibfh
:=\sum_{\nu\in\Nucalh}  (\PibfRMbfperpomeganu \psibf) \phinu 
\in \VHoh,
\]
we deduce
\[
\psibf-\psibfh
= \sum_{\nu\in\Nucalh} \psibftildenu\phinu ,
\qquad\qquad\qquad
\psibftildenu
:= [\widetilde\psi_1^\nu,\widetilde\psi_2^\nu]^T
:= \psibf - \PibfRMbfperpomeganu \psibf .
\]
For any vertex~$\nu$,
standard properties of the Lagrangian partition of unity
and the Poincar\'e inequality
($\psibftildenu$ has zero average over the patch)
yield
\[
\begin{split}
\SemiNorm{\phinu \psibftildenu}{1,\omeganu}
\lesssim    \SemiNorm{\psibftildenu}{1,\omeganu}
            + \homeganu^{-1} \Norm{\psibftildenu}{\omeganu}
\lesssim \SemiNorm{\psibftildenu}{1,\omeganu}
\overset{\eqref{H1-bounded-symcurl}}{\le}
    C(\omeganu) \Norm{\symcurlbb(\psibftildenu)}{\omeganu} .
\end{split}
\]
The assertion follows summing over all patches
and observing that each element in the mesh belong to a
certain number of vertex patches,
which is bounded uniformly in terms of the shape-regularity parameter.
\end{proof}

\end{document}